
\documentclass[12pt]{amsart}
\usepackage{amsmath,amssymb,amsthm,amscd,amsfonts,geometry,color}
\geometry{hmargin=2cm, tmargin=2.5cm, bmargin=2cm}

\renewcommand{\labelenumi}{\rm(\theenumi)}

\theoremstyle{plain}
\newtheorem{thm}{Theorem}[section]
\newtheorem{prop}[thm]{Proposition}

\newtheorem{lem}[thm]{Lemma}

\newtheorem{main}{Main Theorem}

\theoremstyle{remark}
\newtheorem{remark}{Remark}

\newtheorem{example}{Example}

\theoremstyle{definition}



\newcommand{\R}{\mathbb{R}}                       
\newcommand{\N}{\mathbb{N}}                       

\newcommand{\Q}{\mathbf{Q}}                       
\newcommand{\s}{\mathbf{s}}                       
\newcommand{\cs}{\mathbf{c}}                      


\newcommand{\cl}{\operatorname{cl}}               
\newcommand{\intr}{\operatorname{int}}            




\newcommand{\diam}{\operatorname{diam}}           


\newcommand{\cpt}{\operatorname{Comp}}             


\newcommand{\dom}{\operatorname{dom}}            


\begin{document}

\title[Spaces of metrics on locally compact metrizable spaces]{The topological type of spaces consisting of certain metrics on locally compact metrizable spaces with the compact-open topology}
\author{Katsuhisa Koshino}
\address[Katsuhisa Koshino]{Faculty of Engineering, Kanagawa University, Yokohama, 221-8686, Japan}
\email{ft160229no@kanagawa-u.ac.jp}
\subjclass[2020]{Primary 54C35; Secondary 57N20, 54E35, 54E40, 54E45.}
\keywords{(pseudo)metric, admissible metric, the compact-open topology, the Hilbert cube, the pseudo interior, absorbing set}
\maketitle

\begin{abstract}
For a separable locally compact but not compact metrizable space $X$, let $\alpha X = X \cup \{x_\infty\}$ be the one-point compactification with the point at infinity $x_\infty$.
We denote by $EM(X)$ the space consisting of admissible metrics on $X$,
 which can be extended to an admissible metric on $\alpha X$,
 endowed with the compact-open topology.
Let $\cs_0 \subset (0,1)^\N$ be the space of sequences converging to $0$.
In this paper, we shall show that if $X$ is separable, locally connected and locally compact but not compact,
 and there exists a sequence $\{C_i\}$ of connected sets in $X$ such that for all positive integers $i, j \in \N$ with $|i - j| \leq 1$, $C_i \cap C_j \neq \emptyset$,
 and for each compact set $K \subset X$, there is a positive integer $i(K) \in \N$ such that for any $i \geq i(K)$, $C_i \subset X \setminus K$,
 then $EM(X)$ is homeomorphic to $\cs_0$.
\end{abstract}

\section{Introduction}

Throughout this paper, spaces are separable metrizable, maps are continuous,
 but functions are not necessarily continuous.
Let $\R$ be the space of real numbers with the usual metric,
 $\N$ be the set of positive integers,
 and $X$ be a locally compact space.
We denote by $C(X^2)$ the space of continuous real-valued functions on $X^2$ equipped with the compact-open topology.
In the case that $X$ is locally connected,
 $C(X^2)$ is a Fr\'{e}chet space.
Let $PM(X)$, $M(X)$ and $AM(X)$ be the spaces of continuous pseudometrics, continuous metrics and admissible metrics on $X$ with the relative topology of $C(X^2)$, respectively.
As is easily observed,
 $PM(X)$ is a convex non-negative cone,
 and $M(X)$ and $AM(X)$ are convex positive cones in $C(X^2)$.
Recall that $M(X) = AM(X)$ when $X$ is compact.
However, they are not necessarily coincident in general.

\begin{example}
On the half open interval $[0,1)$ topologized by the usual metric, define a continuous metric $d$ as follows:
 $$d(x,y) = \min\{|x - y|, 1 + x - y, 1 - x + y\}$$
 for all $x, y \in [0,1)$.
Then $d$ is not admissible.
\end{example}

If $X$ is not compact,
 then it has the one-point compactification $\alpha X = X \cup \{x_\infty\}$ with the point at infinity $x_\infty$ topologized by the following collection
 $$\{U, (X \setminus K) \cup \{x_\infty\} \mid U \text{ is open in } X, K \text{ is compact in } X\}.$$
In the paper, we shall investigate the topological type of the following subspace of $AM(X)$:
 $$EM(X) = \{d \in AM(X) \mid d \text{ can be extended to an admissible metric on } \alpha X\}.$$
Topologies of function spaces have been studied in the theory of infinite-dimensional topology.
We denote the Hilbert cube by $\Q = [0,1]^\N$ and the pseudo interior by $\s = (0,1)^\N$.
Let
 $$\cs_0 = \bigg\{(x(n))_{n \in \N} \in \s \ \bigg| \ \lim_{n \to \infty} x(n) = 0\bigg\},$$
 that is homeomorphic to several function spaces, refer to \cite{Mil3,Ca,YZ,YSK,Kos14}.
We will establish the following:

\begin{main}
Let $X$ be a locally connected but not compact space.
If there is a sequence $\{C_i\}$ consisting of connected sets in $X$ such that for all $i, j \in \N$ with $|i - j| \leq 1$, $C_i \cap C_j \neq \emptyset$,
 and for each compact subset $K$ of $X$, there exists $i(K) \in \N$ such that for every $i \geq i(K)$, $C_i \subset X \setminus K$,
 then $EM(X)$ is homeomorphic to $\cs_0$.
\end{main}

\section{Preliminaries}

Given spaces $A \subset Y$, denote the interior of $A$ by $\intr{A}$ and the closure of $A$ by $\cl{A}$.
For functions $f : Z \to Y$ and $g : Z \to Y$, and for an open cover $\mathcal{U}$ of $Y$, $f$ is said to be \textit{$\mathcal{U}$-close} to $g$ if for every $z \in Z$, there is $U \in \mathcal{U}$ containing $f(z)$ and $g(z)$.
A closed set $A \subset Y$ is a \textit{$Z$-set} if for each open cover $\mathcal{U}$ of $Y$, there exists a map $f : Y \to Y$ such that $f$ is $\mathcal{U}$-close to the identity map and $f(Y) \cap A = \emptyset$.
A \textit{$Z_\sigma$-set} is a countable union of $Z$-sets.
A \textit{$Z$-embedding} is an embedding whose image is a $Z$-set.
Given a class $\mathfrak{C}$ of spaces, we call $Y$ to be \textit{strongly $\mathfrak{C}$-universal} if the following condition holds.
\begin{itemize}
 \item Let $A$ be a space in $\mathfrak{C}$ and $f : A \to Y$ be a map.
 Assume that $B$ is a closed set in $A$ and the restriction $f|_B$ is a $Z$-embedding.
 Then for each open cover $\mathcal{U}$ of $Y$, there exists a $Z$-embedding $g : A \to Y$ such that $g$ is $\mathcal{U}$-close to $f$ and $g|_B = f|_B$.
\end{itemize}
Let $\mathfrak{C}_\sigma$ denote the class of spaces that are countable unions of closed subspaces belonging to $\mathfrak{C}$.
For spaces $Y \subset M$, $Y$ is \textit{homotopy dense} in $M$ provided that $M$ has a homotopy $h : M \times [0,1] \to M$ such that $h(M \times (0,1]) \subset Y$ and $h(y,0) = y$ for any $y \in M$.
A space $Y$ is said to be a \textit{$\mathfrak{C}$-absorbing set} in $M$ if it satisfies the following conditions.
\begin{enumerate}
 \item $Y \in \mathfrak{C}_\sigma$ and is homotopy dense in $M$.
 \item $Y$ is strongly $\mathfrak{C}$-universal.
 \item $Y$ is contained in some $Z_\sigma$-set in $M$.
\end{enumerate}
The symbol $\mathfrak{M}_2$ stands for the class of \textit{absolute $F_{\sigma\delta}$-spaces},
 that is, $Y \in \mathfrak{M}_2$ if $Y$ is an $F_{\sigma\delta}$-set in any space $M$ which contains $Y$.
It is known that $\cs_0$ is an $\mathfrak{M}_2$-absorbing set in $\s$.
Theorem~3.1 of \cite{BeMo} shows the topological uniqueness of absorbing sets in $\s$.

\begin{thm}\label{abs.}
For subspaces $Y, Z \subset \s$, if the both $Y$ and $Z$ are $\mathfrak{M}_2$-absorbing sets in $\s$,
 then $Y$ and $Z$ are homeomorphic.
\end{thm}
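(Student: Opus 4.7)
My strategy is to realise $EM(X)$ as an $\mathfrak{M}_2$-absorbing set in a suitable ambient space $M$ homeomorphic to $\s$, and then invoke Theorem~\ref{abs.}: since $\cs_0$ is itself an $\mathfrak{M}_2$-absorbing set in $\s$, the conclusion $EM(X) \cong \cs_0$ follows at once. Under the standing hypotheses $X$ is locally connected and locally compact, so $C(X^2)$ is a Fr\'echet space. For $M$ I would take a natural convex subset of $C(X^2)$ containing $EM(X)$ -- for instance the convex cone of bounded continuous pseudometrics on $X$ that extend continuously to $\alpha X$, rescaled and topologised so as to identify it with $\s$ via the standard infinite-dimensional machinery.

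\textbf{Verification of the three absorbing-set conditions.} For condition (1), that $EM(X) \in (\mathfrak{M}_2)_\sigma$: the three defining requirements on $d$ -- being a metric (an $F_\sigma$-type open-separation condition off the diagonal), being admissible (a $G_\delta$-type bi-continuity condition for the identity), and extending continuously to $\alpha X$ (an $F_{\sigma\delta}$-type oscillation condition at $x_\infty$) -- combine to put $EM(X)$ in $\mathfrak{M}_2$. Homotopy density in $M$ follows from the straight-line homotopy $h_t(\rho) = (1-t)\rho + t d_0$ towards a fixed $d_0 \in EM(X)$, obtained by restricting an admissible metric on the compactum $\alpha X$; convexity of $M$ and the fact that adding a strictly positive admissible metric promotes any pseudometric to an admissible metric give $h_t(\rho) \in EM(X)$ for $t > 0$. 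For condition (3), the $Z_\sigma$-containment, a standard filtration by boundedness such as $\{d \in M \mid d \leq n\}$ works, each level being a $Z$-set in $M$ by a perturbation-by-a-large-term argument.

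\textbf{Main obstacle: strong $\mathfrak{M}_2$-universality.} This is where the hypothesised chain $\{C_i\}$ plays its essential role. Given an absolute $F_{\sigma\delta}$ space $A$, a closed $B \subset A$, a map $f : A \to EM(X)$ with $f|_B$ a $Z$-embedding, and an open cover $\mathcal{U}$ of $EM(X)$, I need to construct a $Z$-embedding $g : A \to EM(X)$ with $g|_B = f|_B$ and $g$ $\mathcal{U}$-close to $f$. Since each $C_i$ is connected with $C_i \cap C_{i+1} \neq \emptyset$, the tail $\bigcup_{i \geq n} C_i$ is a connected subset of $X$ eventually escaping every compact set and accumulating only at $x_\infty$. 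The plan is to superimpose on $f(a)$ a small metric perturbation supported on such a tail -- indexed injectively in $a$ by an $\cs_0$-valued coordinate produced via a strong universality property for $\cs_0 \subset \s$ -- chosen so as not to disturb $f(a)$ on the compact set controlling the $\mathcal{U}$-data and the $f(B)$-data.

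\textbf{Technical crux and conclusion.} The difficulty, and the main obstacle, is to verify that each perturbed pseudometric $d_a$ remains a metric (no distinct points get identified), remains admissible on $X$ (the topology is preserved), and still extends to an admissible metric on $\alpha X$ (the oscillation at $x_\infty$ stays controlled). The connectedness of each $C_i$ together with the chain condition $C_i \cap C_{i+1} \neq \emptyset$ is precisely what enables an inductive construction along the chain that preserves all three metric properties while encoding an injective coordinate in $a$. Once strong $\mathfrak{M}_2$-universality is established, $EM(X)$ satisfies all three conditions of an $\mathfrak{M}_2$-absorbing set in $M \cong \s$, and Theorem~\ref{abs.} yields $EM(X) \cong \cs_0$.
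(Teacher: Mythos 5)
Your proposal does not address the statement you were asked to prove. Theorem~\ref{abs.} is the uniqueness theorem for $\mathfrak{M}_2$-absorbing sets in $\s$: given \emph{any} two subspaces $Y, Z \subset \s$ that are both $\mathfrak{M}_2$-absorbing sets, they are homeomorphic. What you have written is instead a sketch of the paper's Main Theorem --- that $EM(X)$ is homeomorphic to $\cs_0$ --- and your very first step is to ``invoke Theorem~\ref{abs.}'' as a black box. Relative to the assigned task this is circular: you assume the statement you were supposed to establish and spend all your effort on verifying the three absorbing-set conditions for the particular space $EM(X)$, which is a different (and logically downstream) piece of the paper. The paper itself does not prove Theorem~\ref{abs.} either; it quotes it as Theorem~3.1 of Bestvina--Mogilski \cite{BeMo}. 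So the honest comparison is: the paper cites the result, and you silently assumed it while proving something else.

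A genuine proof of Theorem~\ref{abs.} would run along the standard Bestvina--Mogilski lines and uses none of the structure of $EM(X)$: given two $\mathfrak{M}_2$-absorbing sets $Y$ and $Z$ in $\s$, one first uses homotopy density and the $Z_\sigma$-containment to arrange that the relevant sets can be pushed off $Z$-sets, and then runs a back-and-forth scheme in which the strong $\mathfrak{M}_2$-universality of $Z$ is applied to (pieces of the $\mathfrak{M}_2$-decomposition of) $Y$ and vice versa, producing finer and finer $Z$-embeddings; a $Z$-set unknotting theorem in $\s$ upgrades these to ambient homeomorphisms, and a convergence (Cauchy sequence of homeomorphisms) argument yields a homeomorphism $h : \s \to \s$ with $h(Y) = Z$. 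None of your four paragraphs engages with this mechanism --- there is no back-and-forth, no unknotting, no limit homeomorphism --- so the proposal contains no proof of the stated theorem.
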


In the paper \cite{Kos16}, the author investigates the topological types of $PM(X)$ and $M(X)$,
 which are endowed with the uniform convergence topology.
By the same argument as it, we can establish the following theorem.

\begin{thm}\label{pm}
Let $X$ be a locally connected space.
If $X$ is not discrete,
 then both $PM(X)$ and $M(X)$ are homeomorphic to $\s$.
\end{thm}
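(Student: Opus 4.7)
The plan is to invoke a standard topological characterization of $\s$: up to homeomorphism, $\s$ is the unique separable, topologically complete, infinite-dimensional, non-locally-compact convex subset of a Fr\'echet space (Bessaga--Pe\l czy\'nski, Dobrowolski--Toru\'nczyk; this is the characterization that underlies \cite{Kos16}). Since $C(X^2)$ with the compact-open topology is a separable Fr\'echet space when $X$ is locally compact and locally connected, it suffices to verify that each of $PM(X)$ and $M(X)$ is convex, topologically complete, infinite-dimensional, and not locally compact.

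First I would observe that $PM(X)$ is a closed convex subset of $C(X^2)$: the pseudometric axioms (non-negativity, vanishing on the diagonal, symmetry, triangle inequality) are each preserved under pointwise limits, hence under compact-open limits, as well as under non-negative linear combinations. Thus $PM(X)$ is already completely metrizable. For $M(X)$, fix an admissible metric $\rho$ on $X$ and a compact exhaustion $X = \bigcup_{n \in \N} K_n$; for each $n,k \in \N$ the set $F_{n,k} = \{(x,y) \in K_n^2 \mid \rho(x,y) \geq 1/k\}$ is compact, so $U_{n,k} = \{d \in PM(X) \mid d(x,y) > 0 \text{ for all } (x,y) \in F_{n,k}\}$ is open in $PM(X)$, and $M(X) = \bigcap_{n,k} U_{n,k}$ is a convex $G_\delta$, hence topologically complete.

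For infinite-dimensionality and non-local-compactness I would use that $X$ is not discrete. Pick an accumulation point $p \in X$, a sequence $p_n \to p$ of distinct points, and, by local connectedness, pairwise disjoint connected open neighborhoods $U_n$ of $p_n$ with $\diam U_n \to 0$, each containing two distinct points $a_n, b_n$. For each $n$, a Urysohn function $\varphi_n : X \to [0,1]$ with $\varphi_n(a_n) = 1$, $\varphi_n(b_n) = 0$, and $\varphi_n \equiv 0$ outside $U_n$, yields $d_n(x,y) = |\varphi_n(x) - \varphi_n(y)| \in PM(X)$. Evaluating $\sum c_m d_m$ at $(a_n, b_n)$ picks out the single coefficient $c_n$, so the $d_n$ are linearly independent, giving infinite-dimensionality of $PM(X)$; translating by $\rho$ gives the same for $M(X)$. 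For non-local-compactness, any compact subset of $C(X^2)$ is uniformly bounded on every $K_m^2$; given $d \in PM(X)$ and a proposed compact neighborhood $C$ of $d$, choose $n$ with $\cl U_n \subset K_m$ for suitable $m$ and observe that $\{d + t d_n \mid t \geq 0\}$ escapes $C$ once $t$ is large enough. The same construction in the slice $\rho + \{t d_n\}$ settles $M(X)$.

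The main obstacle, as in \cite{Kos16}, is verifying non-local-compactness under the compact-open topology, which is strictly weaker than the uniform topology of \cite{Kos16} and therefore admits more convergent sequences. The key point is that the seminorms defining the compact-open topology remain sensitive to the $K_m^2$-behaviour of pseudometrics, while the $d_n$ are chosen with supports inside progressively small compacta $\cl U_n$; the cone structure of $PM(X)$ then supplies genuinely unbounded directions in every compact-open neighborhood of every element, and the homeomorphism with $\s$ follows from the characterization cited above.
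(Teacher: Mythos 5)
The paper gives no proof of this theorem at all---it simply asserts that the argument of \cite{Kos16} carries over---so your plan of verifying the hypotheses of the Bessaga--Pe\l czy\'nski/Dobrowolski--Toru\'nczyk characterization of $\s$ (an infinite-dimensional, Polish, non-locally-compact convex subset of a Fr\'echet space is homeomorphic to $\s$) is exactly the intended route. Your verifications that $PM(X)$ is a closed convex subset of the Fr\'echet space $C(X^2)$, that $M(X)$ is a convex $G_\delta$ in it, and that both are infinite-dimensional via the disjointly supported pseudometrics $d_n(x,y)=|\varphi_n(x)-\varphi_n(y)|$ are all sound.

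The gap is in the non-local-compactness step, which is the one hypothesis requiring real work. The observation that the ray $\{d+td_n \mid t\ge 0\}$ eventually leaves a proposed compact neighborhood $C$ of $d$ proves nothing: every unbounded ray eventually leaves every compact set, and nothing forces a neighborhood of $d$ to contain the entire ray, so no contradiction is reached. Your closing claim that the cone structure supplies ``genuinely unbounded directions in every compact-open neighborhood of every element'' is moreover false in the case, permitted by the theorem, that $X$ is compact: there $C(X^2)$ carries the sup-norm, and the basic neighborhood $\{e\in PM(X): \sup_{X^2}|e-d|<\varepsilon\}$ is norm-bounded and contains no unbounded ray, yet it is still not relatively compact. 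The argument that actually works uses exactly the $d_n$ you already built, but as a separated sequence rather than as escape directions: arrange all the supports $U_n$ inside one fixed compact set $L$ (possible since the $U_n$ accumulate at $p$ and $X$ is locally compact). Given any basic neighborhood $N=\{e\in PM(X): \sup_{K^2}|e-d|<\varepsilon\}$ of $d$, the sequence $e_n=d+(\varepsilon/2)d_n$ lies in $N$ because $0\le d_n\le 1$, while for $n\ne n'$ one has $\sup_{L^2}|e_n-e_{n'}|\ge (\varepsilon/2)\,|d_n(a_n,b_n)-d_{n'}(a_n,b_n)|=\varepsilon/2$, since $\varphi_{n'}$ vanishes on $U_n$. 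Hence $\{e_n\}$ has no subsequence converging uniformly on $L^2$, so $\cl{N}$ is not compact and $PM(X)$ is nowhere locally compact; translating by a metric gives the same for $M(X)$. With this repair the proof goes through.
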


A sequence $\{C_i\}$ of subsets in a space $Y$ is a \textit{simple chain} if for any $i, j \in \N$ with $|i - j| \leq 1$, $C_i \cap C_j \neq \emptyset$.
It is said that $Y$ is \textit{chain-connected to infinity} if there exists a simple chain $\{C_i\}$ of connected subsets in $Y$ such that for every compact set $K \subset Y$, there is $i(K) \in \N$ such that for any $i \geq i(K)$, $C_i \subset Y \setminus K$.

For a metric space $Y = (Y,d_Y)$, a subset $A \subset Y$ and a positive number $\delta > 0$, put
 $$B_{d_Y}(A,\delta) = \{y \in Y \mid d_Y(y,A) < \delta\} \text{ and } \overline{B}_{d_Y}(A,\delta) = \{y \in Y \mid d_Y(y,A) \leq \delta\}.$$
When $A = \{a\}$,
 we write $B_{d_Y}(a,\delta) = B_{d_Y}(\{a\},\delta)$ and $\overline{B}_{d_Y}(a,\delta) = \overline{B}_{d_Y}(\{a\},\delta)$ for simplicity.
Moreover, $\diam_{d_Y}(A)$ denotes the diameter of $A$.
For metric spaces $Y_i = (Y_i,d_{Y_i})$, $i = 1, \cdots, n$, we will use an admissible metric $d_{\prod_{i = 1}^n Y_i}$ on $\prod_{i = 1}^n Y_i$ defined by
 $$d_{\prod_{i = 1}^n Y_i}((y_1,\cdots,y_n),(z_1,\cdots,z_n)) = \max_{1 \leq i \leq n} d_{Y_i}(y_i,z_i)$$
 for any $(y_1,\cdots,y_n), (z_1,\cdots,z_n) \in \prod_{i = 1}^n Y_i$.
We denote by $\cpt(Y)$ the hyperspace consisting of non-empty compact sets in $Y$ endowed with the Vietoris topology.
Note that the topology of $\cpt(Y)$ is induced by the Hausdorff metric $(d_Y)_H$,
 that is defined as follows:
 $$(d_Y)_H(A,B) = \inf\{r > 0 \mid A \subset B_{d_Y}(B,r), B \subset B_{d_Y}(A,r)\}$$
 for all $A, B \in \cpt(Y)$, see \cite[Proposition~5.12.4]{Sakaik11}.
Under our assumption, we have the following proposition, refer to \cite[Corollary~1.11.4 and Proposition~1.11.13]{Mil3}:

\begin{prop}\label{hyp.}
If $Y$ is a connected, locally connected and locally compact space,
 then so is $\cpt(Y)$.
\end{prop}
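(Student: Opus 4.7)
The plan is to verify separately that $\cpt(Y)$ is locally compact, locally connected, and connected, working throughout with the Vietoris basis consisting of the sets
\[\langle U_1,\ldots,U_n\rangle = \{A \in \cpt(Y) \mid A \subset \textstyle\bigcup_{i=1}^n U_i,\ A \cap U_i \neq \emptyset \text{ for each } i\},\]
where $U_1,\ldots,U_n$ are open in $Y$.

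For \emph{local compactness}, given $K \in \cpt(Y)$, I would use the local compactness of $Y$ together with the compactness of $K$ to produce an open set $U \subset Y$ with $K \subset U$ and $\cl{U}$ compact. The basic set $\langle U\rangle$ is then an open neighborhood of $K$ in $\cpt(Y)$ contained in $\cpt(\cl{U})$, and the latter is compact because $\cl{U}$ is compact Hausdorff, furnishing a compact neighborhood of $K$.

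For \emph{connectedness}, I would observe that the symmetric product $F_n(Y) = \{A \in \cpt(Y) \mid |A| \leq n\}$ is the continuous image of the connected space $Y^n$ under $(y_1,\ldots,y_n) \mapsto \{y_1,\ldots,y_n\}$, hence is connected, so the ascending union $\fin(Y) = \bigcup_n F_n(Y)$ is connected as well. Since every compact set in a metric space is a Hausdorff limit of its finite $\varepsilon$-nets, $\fin(Y)$ is dense in $\cpt(Y)$, which therefore is connected.

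For \emph{local connectedness}, given $K \in \langle U_1,\ldots,U_n\rangle$, I would first shrink each $U_i$ using local connectedness of $Y$ so that it becomes a connected open set while preserving the property that the new basic set contains $K$ and sits inside the old one, reducing the problem to showing that $\langle U_1,\ldots,U_n\rangle$ is connected whenever every $U_i$ is connected. For this I would exhibit the finite-subset part of $\langle U_1,\ldots,U_n\rangle$ as a union, over multi-indices $(k_1,\ldots,k_n) \in \N^n$, of the continuous images of the connected products $\prod_{i=1}^n U_i^{k_i}$ in $\cpt(Y)$ under $(y_{i,j}) \mapsto \bigcup_{i,j}\{y_{i,j}\}$; all these images contain the common element $\{p_1,\ldots,p_n\}$ for any fixed choice of $p_i \in U_i$, so the union is connected. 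Density of the finite-subset part in $\langle U_1,\ldots,U_n\rangle$ (augment a finite $\varepsilon$-net of any $A$ by one point from each $A \cap U_i$) then gives connectedness of the whole basic set. The main obstacle will be this last step, where one must verify that every finite element of $\langle U_1,\ldots,U_n\rangle$ really does arise in the image of some such product (assign each of its points to a single $U_i$ that contains it, arranging the cardinalities so that each factor is used at least once) and that the claimed density holds; the rest is routine Vietoris-basis bookkeeping.
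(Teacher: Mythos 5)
The paper does not actually prove this proposition; it simply cites \cite[Corollary~1.11.4 and Proposition~1.11.13]{Mil3}, so your self-contained argument is necessarily a different route, though it is essentially the standard textbook proof that those references contain. Your three parts are all sound: the local compactness argument via $\langle U\rangle \subset \cpt(\cl{U})$ with $\cpt(\cl{U})$ compact is correct; connectedness via the symmetric products $F_n(Y)$ and the density of $\fin(Y)$ is correct; and the local connectedness argument (reduce to basic sets with connected entries, show the finite-subset part is a union of connected images of products $\prod_i U_i^{k_i}$ through a common point, then pass to the closure) is the right one. The only step you should tighten is the reduction itself: you cannot in general ``shrink each $U_i$'' to a \emph{single} connected open subset and still cover $K$. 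What you need is to replace $U_1,\ldots,U_n$ by a finite family $V_1,\ldots,V_m$ of connected open sets such that (a) each $V_j$ lies in some $U_i$, (b) the $V_j$ cover $K$ and each meets $K$, and (c) each $U_i$ contains at least one $V_j$ (pick a point of $K\cap U_i$ and a connected open neighborhood of it inside $U_i$); condition (c) is what guarantees $\langle V_1,\ldots,V_m\rangle \subset \langle U_1,\ldots,U_n\rangle$, and without it the inclusion can fail. With that adjustment, and the routine verification that every finite member of the basic set is hit by some multi-index (assign to each $U_i$ a chosen point of the intersection first, then distribute the rest), your proof is complete; what the citation buys the paper is brevity, while your version buys self-containedness at the cost of this Vietoris bookkeeping.
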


\section{The Borel complexity of $EM(X)$ in $PM(X)$}

In this section, it will be shown that $EM(X) \in \mathfrak{M}_2$.
Since $X$ is a separable locally compact metrizable space,
 we can write $X = \bigcup_{n \in \N} X_n$,
 where each $X_n$ is compact and $X_n \subset \intr{X_{n + 1}}$.
For positive integers $n, m \in \N$, set
 $$A(n,m) = \{d \in PM(X) \mid d(X_n,X \setminus X_{n + 1}) \geq 1/m\},$$
 which is closed in $PM(X)$.
We shall prove the following:

\begin{prop}\label{Fsigmadelta}
The subset $AM(X)$ is an $F_{\sigma\delta}$-set in $PM(X)$.
\end{prop}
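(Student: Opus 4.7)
The plan is to write $AM(X)$ as the intersection of two $F_{\sigma\delta}$ subsets of $PM(X)$: the set $M(X)$ of continuous metrics on $X$, and the separation set $\bigcap_n \bigcup_m A(n,m)$.

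First I would establish the characterization
\[
AM(X) = M(X) \cap \bigcap_{n \in \N} \bigcup_{m \in \N} A(n,m).
\]
For the forward inclusion, if $d$ is admissible then $X_n$ is $d$-compact and disjoint from the $d$-closed set $X \setminus X_{n+1}$ (using $X_n \subset \intr{X_{n+1}}$), so the continuous function $x \mapsto d(x, X \setminus X_{n+1})$ attains a positive minimum on the compact set $X_n$, placing $d \in A(n,m)$ for some $m$. For the reverse inclusion, given $d \in M(X)$ satisfying the separation condition and $x \in U$ with $U$ open in $X$, pick $n$ with $x \in X_n$ and $m$ with $d(X_n, X \setminus X_{n+1}) \geq 1/m$, so that $B_d(x,1/m) \subset X_{n+1}$; since $d$ restricts to a continuous metric on the compact Hausdorff space $X_{n+1}$, the identity $X_{n+1} \to (X_{n+1}, d)$ is a homeomorphism, and hence some smaller $d$-ball around $x$ sits inside $U \cap X_{n+1} \subset U$. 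Thus $d$ generates the topology of $X$.

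As for the $F_{\sigma\delta}$ complexity, the separation part $\bigcap_n \bigcup_m A(n,m)$ is immediate from the already noted closedness of each $A(n,m)$. The more substantive task is to show that $M(X)$ itself is $F_{\sigma\delta}$, since the metric condition ``$d(x,y) > 0$ for all $x \neq y$'' is a priori an uncountable family of constraints. I would fix any $\rho \in AM(X)$ and set
\[
B(n,k,j) = \{d \in PM(X) \mid d(x,y) \geq 1/j \text{ whenever } (x,y) \in X_n \times X_n \text{ and } \rho(x,y) \geq 1/k\}.
\]
Each $B(n,k,j)$ is the intersection of the closed pointwise conditions $\{d \in PM(X) \mid d(x,y) \geq 1/j\}$ over a fixed family of pairs and so is closed in $PM(X)$. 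The identity $M(X) = \bigcap_{n,k} \bigcup_j B(n,k,j)$ then follows: any $d \in M(X)$ is strictly positive on the compact set $\{(x,y) \in X_n^2 \mid \rho(x,y) \geq 1/k\}$ and hence bounded below by $1/j$ for some $j$, while conversely any distinct $x, y \in X$ lie in $X_n^2$ with $\rho(x,y) \geq 1/k$ for appropriate $n, k$, so membership in the intersection forces $d(x,y) > 0$.

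Combining, $AM(X)$ is the intersection of two $F_{\sigma\delta}$ sets and is therefore $F_{\sigma\delta}$ in $PM(X)$. The only nontrivial step is the reduction of the metric condition to countably many closed conditions using an auxiliary admissible metric $\rho$ to exhaust $X^2$ off the diagonal by the compact subsets $\{(x,y) \in X_n^2 \mid \rho(x,y) \geq 1/k\}$; once this countable exhaustion is in hand, the argument becomes routine bookkeeping.
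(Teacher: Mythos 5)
Your proof is correct and uses essentially the same decomposition as the paper, namely $AM(X) = M(X) \cap \bigcap_{n \in \N}\bigcup_{m \in \N}A(n,m)$ with each $A(n,m)$ closed. The only difference is that where the paper cites \cite{Ish} for the fact that $M(X)$ is $G_\delta$ in $PM(X)$, you give a correct self-contained argument that $M(X)$ is $F_{\sigma\delta}$ (and in fact each of your sets $\bigcup_{j}B(n,k,j)$ is open, so your exhaustion by the compact sets $\{(x,y)\in X_n^2 \mid \rho(x,y)\ge 1/k\}$ recovers the $G_\delta$ statement as well).
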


\begin{proof}
We can write
 $$AM(X) = M(X) \bigcap \Bigg(\bigcap_{n \in \N} \bigcup_{m \in \N} A(n,m)\Bigg).$$
Remark that $M(X)$ is $G_\delta$ in $PM(X)$, refer to the proof of \cite[Lemma~5.1]{Ish},
 and therefore $AM(X)$ is $F_{\sigma\delta}$ in $PM(X)$.
\end{proof}

Since $X$ is separable locally compact,
 $C(X^2)$ is completely metrizable.
Hence the both closed subset $PM(X)$ and $G_\delta$ subset $M(X)$ of $C(X^2)$ are Baire spaces.
Furthermore, the following holds:

\begin{prop}
The following are equivalent:
\begin{enumerate}
 \item $X$ is compact;
 \item $AM(X)$ is a Baire space.
\end{enumerate}
\end{prop}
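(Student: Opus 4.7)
My plan is to prove the two implications separately. For $(1) \Rightarrow (2)$, if $X$ is compact then for $n$ sufficiently large $X \setminus X_{n+1} = \emptyset$, whence the conditions defining $A(n,m)$ hold trivially and $AM(X) = M(X)$. Since $M(X)$ is a $G_\delta$-subset of the completely metrizable space $C(X^2)$, it is itself Polish, and hence Baire.

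For $(2) \Rightarrow (1)$ I would argue by contraposition. Suppose $X$ is not compact, so $X \setminus X_n \neq \emptyset$ for every $n$. For each $m \in \N$ set
$$U_m \;=\; AM(X) \setminus A(1, m) \;=\; \{d \in AM(X) \mid d(X_1, X \setminus X_2) < 1/m\},$$
which is open in $AM(X)$ because $A(1, m)$ is closed in $PM(X)$. Since $X_1$ is compact and disjoint from the closed set $X \setminus X_2$, every admissible metric satisfies $d(X_1, X \setminus X_2) > 0$, so $\bigcap_{m \in \N} U_m = \emptyset$. Thus, once each $U_m$ is shown to be dense in $AM(X)$, I obtain a countable family of dense open subsets of $AM(X)$ with empty (hence non-dense) intersection, contradicting the Baire property.

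The main technical step is establishing the density of each $U_m$. Given $d \in AM(X)$ and a basic compact-open neighborhood $V = \{e \mid \sup_{(x,y) \in K} |e(x,y) - d(x,y)| < \varepsilon\}$ with $K \subset X^2$ compact and $\varepsilon > 0$, I would enlarge $K$ to ensure $\pr_1(K) \cup \pr_2(K) \subset X_{n_0}$ for some $n_0 \geq 2$, and choose $x_1 \in X_1$ together with $x_\infty \in X \setminus X_{n_0+1}$ (possible by non-compactness of $X$). The aim is to construct an admissible $d' \in AM(X)$ coinciding with $d$ on $X_{n_0} \times X_{n_0} \supset K$, so that automatically $d' \in V$, while $d'(x_1, x_\infty) < 1/m$. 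I would do this by amalgamation: fix an auxiliary admissible metric $\rho$ on $X$ bounded above by $1/(2m)$ and glue $d|_{X_{n_0}}$ to $\rho$ outside $X_{n_0}$ through a ``bridge'' from $x_1$ to $x_\infty$ of length less than $1/m$, defining $d'(p,q)$ as an infimum over piecewise paths alternating between the $d$- and $\rho$-regions.

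The principal obstacle is to verify simultaneously that the amalgamated $d'$ is admissible on all of $X$ and truly restricts to $d$ on $X_{n_0} \times X_{n_0}$; the delicate point is that a bridge between $x_1$ and $x_\infty$ could in principle shorten $d$-distances between two points of $X_{n_0}$ via excursions outside $X_{n_0}$. This can be addressed either by choosing the attachment carefully, so that no shortest $d'$-path between two points of $X_{n_0}$ ever exits $X_{n_0}$, or by appealing to a Hausdorff-style extension theorem for admissible metrics to build $d'$ directly as an admissible extension of $d|_{X_{n_0}}$ subject to the side constraint $d'(x_1, x_\infty) < 1/m$.
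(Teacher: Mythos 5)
Your overall strategy is the right one, and in fact it is the de Morgan dual of the paper's argument: the paper shows directly that $AM(X)=\bigcup_{m}\bigl(A(1,m)\cap AM(X)\bigr)$ is a countable union of closed nowhere dense subsets of itself, which is exactly your statement that the open sets $U_m=AM(X)\setminus A(1,m)$ are dense with empty intersection. The forward implication and the observation that $\bigcap_m U_m=\emptyset$ (every admissible metric keeps the compact set $X_1$ at positive distance from the disjoint closed set $X\setminus X_2$) are both fine, modulo the harmless normalization $X_1\neq\emptyset$.

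The problem is that the one step you label ``the principal obstacle'' is the entire content of the proof, and you leave it unresolved, offering two candidate fixes without executing either. Your first alternative (an amalgamated $d'$ defined as an infimum over piecewise paths alternating between a $d$-region and a $\rho$-region) is genuinely delicate for exactly the reason you identify --- a bridge of length $<1/m$ can shorten $d$-distances inside $X_{n_0}$ --- and you give no argument that this can be prevented. Your second alternative is the paper's route, but the phrase ``subject to the side constraint $d'(x_1,x_\infty)<1/m$'' misses the point that makes it work: Hausdorff's extension theorem takes no side constraints, so the constraint must be encoded in the partial metric itself. Concretely, one extends $d|_{X_{n_0}^2}$ to the compact set $X_{n_0}\cup\{z\}$ for a \emph{single} added point $z\in X\setminus X_{n_0+1}$ by the explicit formula $\rho(x,z)=d(x,w)+\delta$ for a fixed basepoint $w\in X_{n_0}$ and a fixed $\delta<1/m$ (the paper takes $w\in X_k$ and $\delta=1/(m+1)$); the triangle inequality for $\rho$ is then a two-line check ($\rho(x,z)+\rho(z,y)=d(x,w)+d(y,w)+2\delta\geq d(x,y)$ and $|\rho(x,z)-\rho(y,z)|\leq d(x,y)$), and Hausdorff's theorem extends $\rho$ to an admissible metric $\tilde\rho$ on all of $X$ with $\tilde\rho(X_1,X\setminus X_2)\leq\tilde\rho(w,z)=\delta<1/m$ and $\tilde\rho|_{X_{n_0}^2}=d|_{X_{n_0}^2}$. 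Until you write down such a partial metric and verify it, the density of $U_m$ --- and hence the whole contrapositive --- is not established.
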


\begin{proof}
If $X$ is compact,
 $AM(X)$ coincides with the Baire space $M(X)$.
We will prove the implication (2) $\Rightarrow$ (1).
Suppose that $X$ is not compact.
Observe that $AM(X) \subset \bigcup_{m \in \N} A(1,m)$.
To show that every $A(1,m) \cap AM(X)$ is nowhere dense, take any admissible metric $d \in A(1,m) \cap AM(X)$ and any neighborhood $U$ of $d$.
We can choose $k \geq m$ so that if for any $(x,y) \in X_k^2$, $d(x,y) = \rho(x,y)$,
 then $\rho \in U$.
Since $X$ is not compact,
 we may assume that $X_k \neq \emptyset$.
Moreover, $X \setminus X_{k + 1}$ is not empty.
Fixing points $w \in X_k$ and $z \in X \setminus X_{k + 1}$, define an admissible metric $\rho : (X_k \cup \{z\})^2 \to [0,\infty)$ on $X_k \cup \{z\}$ as follows:
 $$\rho(x,y) = \left\{
 \begin{array}{ll}
 d(x,y) & \text{if } (x,y) \in X_k^2,\\
 d(x,w) + 1/(m + 1) & \text{if } x \in X_k \text{ and } y = z,\\
 d(y,w) + 1/(m + 1) & \text{if } x = z \text{ and } y \in X_k,\\
 0 & \text{if } x = y = z.
 \end{array}
 \right.$$
Due to Hausdorff's metric extension theorem \cite{Hau}, $\rho$ can be extended to an admissible metric $\tilde{\rho} \in AM(X)$.
Note that for each $(x,y) \in X_k^2$, $\tilde{\rho}(x,y) = \rho(x,y) = d(x,y)$,
 so $\tilde{\rho} \in U$.
Moreover, we have
 $$\tilde{\rho}(X_k,X \setminus X_{k + 1}) \leq \tilde{\rho}(w,z) = \rho(w,z) = 1/(m + 1) < 1/m,$$
 which implies that $\tilde{\rho} \notin A(1,m) \cap AM(X)$.
Therefore $A(1,m) \cap AM(X)$ is nowhere dense,
 and hence $AM(X)$ is not a Baire space.
This is a contradiction.
Consequently, (2) $\Rightarrow$ (1) holds.
\end{proof}

When $X$ is not compact,
 the family $\{\alpha X \setminus X_n \mid n \in \N\}$ is an open neighborhood basis of the point $x_\infty$ in $\alpha X$ and $x_\infty$ is not isolated.
The space $EM(X)$ can be represented as follows:

\begin{lem}\label{diam.}
If $X$ is not compact,
 then
 $$EM(X) = \bigg\{d \in AM(X) \bigg| \lim_{n \to \infty} \diam_d(X \setminus X_n) = 0\bigg\}.$$
\end{lem}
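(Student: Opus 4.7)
The plan is to prove the two inclusions. For the forward direction, suppose $d \in EM(X)$ admits an admissible extension $\tilde d$ on $\alpha X$. Since $\alpha X$ is compact and $\{(X \setminus X_n) \cup \{x_\infty\}\}_{n \in \N}$ is a decreasing neighborhood base of $x_\infty$, admissibility of $\tilde d$ forces the $\tilde d$-diameters of these sets to tend to $0$. Restricting to $X$ yields $\diam_d(X \setminus X_n) \to 0$.

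For the reverse inclusion, I would construct the extension explicitly. Given $d \in AM(X)$ with $\lim_n \diam_d(X \setminus X_n) = 0$, define $D : X \to [0,\infty)$ by
$$D(x) = \lim_{n \to \infty} d(x, X \setminus X_n),$$
an increasing limit since $X \setminus X_n \supseteq X \setminus X_{n+1}$. Picking $z_{n_0} \in X \setminus X_{n_0}$ with $\diam_d(X \setminus X_{n_0})$ finite, the diameter hypothesis bounds $d(x, \cdot)$ uniformly on $X \setminus X_n$ for $n \geq n_0$, so $D(x) < \infty$. Because $d$ is admissible and $x \in \intr{X_{n+1}}$ for some $n$, some $d$-ball around $x$ lies in $X_{n+1}$, giving $d(x, X \setminus X_{n+1}) > 0$ and hence $D(x) > 0$. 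As a monotone pointwise limit of $1$-Lipschitz functions, $D$ is $1$-Lipschitz. Define $\tilde d$ on $\alpha X$ by $\tilde d|_{X^2} = d$, $\tilde d(x, x_\infty) = \tilde d(x_\infty, x) = D(x)$ for $x \in X$, and $\tilde d(x_\infty, x_\infty) = 0$.

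Verifying that $\tilde d$ is a metric is routine: positivity follows from $D > 0$ on $X$, and the triangle inequalities involving $x_\infty$ follow by taking limits of $d(x, z_n) + d(y, z_n) \geq d(x, y)$ for $z_n \in X \setminus X_n$. Continuity of $\tilde d$ on $X^2$ is inherited from $d$. At $(x_\infty, x_\infty)$, if $y, z \in X \setminus X_N$ then $\tilde d(y, z) \leq \diam_d(X \setminus X_N)$; and if $y = x_\infty$, $z \in X \setminus X_N$, then for every $m \geq N$ and any $w \in X \setminus X_m \subseteq X \setminus X_N$ one has $d(z, w) \leq \diam_d(X \setminus X_N)$, so $D(z) \leq \diam_d(X \setminus X_N)$. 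Continuity at $(x, x_\infty)$ with $x \in X$ then follows by combining the $1$-Lipschitz estimate for $D$ with this bound.

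Finally, to check that $\tilde d$ induces the one-point-compactification topology at $x_\infty$: given a basic neighborhood $(X \setminus K) \cup \{x_\infty\}$, compactness of $K$ together with continuity and positivity of $D$ give $r := \inf_{y \in K} D(y) > 0$, so $B_{\tilde d}(x_\infty, r) \subseteq (X \setminus K) \cup \{x_\infty\}$. Conversely, given $r > 0$, choose $N$ with $\diam_d(X \setminus X_N) < r/2$; the estimate above yields $D(y) \leq \diam_d(X \setminus X_N) < r$ for every $y \in X \setminus X_N$, so $(X \setminus X_N) \cup \{x_\infty\} \subseteq B_{\tilde d}(x_\infty, r)$. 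Hence $\tilde d$ is an admissible metric on $\alpha X$ extending $d$, so $d \in EM(X)$. The main technical obstacle is this last step together with the continuity verification at $x_\infty$, where one must carefully trade off the diameter hypothesis against positivity of $D$ on compacta; the rest is formal.
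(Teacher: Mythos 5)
Your proof is correct and follows essentially the same route as the paper: both extend $d$ to $\alpha X$ by declaring the distance from $x$ to $x_\infty$ to be the limit of distances from $x$ to points escaping every $X_n$ (your $D(x) = \lim_{n} d(x, X\setminus X_n)$ coincides with the paper's $\lim_{n} d(x,x_n)$ precisely because $\diam_d(X\setminus X_n)\to 0$), and both use admissibility of $d$ on $X$ to get $D>0$. The only cosmetic difference is in the last step: the paper verifies continuity of the extended metric on $(\alpha X)^2$ and then invokes compactness of $\alpha X$ to upgrade a continuous metric to an admissible one, whereas you verify admissibility at $x_\infty$ directly by matching $\tilde d$-balls against the neighborhood base $\{(X\setminus X_n)\cup\{x_\infty\}\}$; both are valid.
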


\begin{proof}
It is easy to show that
 $$EM(X) \subset \bigg\{d \in AM(X) \bigg| \lim_{n \to \infty} \diam_d(X \setminus X_n) = 0\bigg\}.$$
Conversely, we will verify that the left hand side contains the right one.
Fix any $d \in \{d \in AM(X) \mid \lim_{n \to \infty} \diam_d(X \setminus X_n) = 0\}$.
Taking $x_n \in X \setminus X_n$, we can obtain a Cauchy sequence $\{d(x,x_n)\} \subset \R$ for each $x \in X$ because $\lim_{n \to \infty} \diam_d(X \setminus X_n) = 0$.
Let $\overline{d} : (\alpha X)^2 \to [0,\infty)$ be a function defined by
 $$\overline{d}(x,y) = \left\{
 \begin{array}{ll}
 d(x,y) &\text{if } x, y \in X,\\
 \lim_{n \to \infty} d(x,x_n) &\text{if } x \in X \text{ and } y = x_\infty,\\
 \lim_{n \to \infty} d(y,x_n) &\text{if } x = x_\infty \text{ and } y \in X,\\
 0 &\text{if } x = y = x_\infty.
 \end{array}
 \right.$$
Observe that for any $x \in X$, $\overline{d}(x,x_\infty) > 0$.
Indeed, $x \in X_m$ for some $m \in \N$.
Since $d \in AM(X)$,
 for every $n \geq m + 1$,
 $$d(x,x_n) \geq d(X_m,X \setminus X_{m + 1}) > 0,$$
 and hence
 $$\overline{d}(x,x_\infty) = \lim_{n \to \infty} d(x,x_n) \geq d(X_m,X \setminus X_{m + 1}) > 0.$$
As is easily observed,
 $\overline{d}$ is a metric on $\alpha X$.
We show that $\overline{d} \in M(\alpha X)$,
 that is, $\overline{d}$ is continuous.
Let any $(x,y) \in (\alpha X)^2$.
When $(x,y) \in X^2$,
 the continuity of $\overline{d}$ at $(x,y)$ follows from the one of $d$.
When $x \in X$ and $y = x_\infty$,
 for each $\epsilon > 0$, there exist a neighborhood $U \subset X$ of $x$ and a positive integer $m \in \N$ such that for any $z \in U$, $d(x,z) \leq \epsilon/2$,
 and $\diam_d(X \setminus X_m) \leq \epsilon/4$.
Verify that $\diam_{\overline{d}}(\alpha X \setminus X_m) = \diam_d(X \setminus X_m)$.
For every point $(z,w) \in U \times (\alpha X \setminus X_m)$, that is a neighborhood of $(x,y)$ in $(\alpha X)^2$,
 we have that for each $n \geq m$,
\begin{align*}
 |\overline{d}(z,w) - \overline{d}(x,y)| &\leq |\overline{d}(z,w) - \overline{d}(z,x_n)| + |\overline{d}(z,x_n) - \overline{d}(x,x_n)| + |\overline{d}(x,x_n) - \overline{d}(x,y)|\\
 &\leq \overline{d}(w,x_n) + \overline{d}(x,z) + \overline{d}(y,x_n)\\
 &\leq \diam_d(X \setminus X_m) + d(x,z) + \diam_{\overline{d}}(\alpha X \setminus X_m)\\
 &= 2\diam_d(X \setminus X_m) + d(x,z) \leq \epsilon/2 + \epsilon/2 = \epsilon.
\end{align*}
Hence $\overline{d}$ is continuous at $(x,y)$.
Similarly, the continuity of $\overline{d}$ at $(x,y)$, where $x = x_\infty$ and $y \in X$,
 is valid.
When $x = x_\infty$ and $y = x_\infty$,
 for each $\epsilon > 0$, there is $m \in \N$ such that $\diam_d(X \setminus X_m) \leq \epsilon$.
Then $(\alpha X \setminus X_m)^2$ is a neighborhood of $(x,y)$ and for each $(z,w) \in (\alpha X \setminus X_m)^2$,
 $$|\overline{d}(z,w) - \overline{d}(x,y)| = \overline{d}(z,w) \leq \diam_{\overline{d}}(\alpha X \setminus X_m) = \diam_d(X \setminus X_m) \leq \epsilon,$$
 which implies the continuity of $\overline{d}$ at $(x,y)$.
Thus $\overline{d}$ is continuous.
It follows from the compactness of $\alpha X$ that $\overline{d} \in AM(\alpha X)$.
As a consequense, $d$ is extended to the admissible metric $\overline{d}$,
 so $d \in EM(X)$.
The proof is completed.
\end{proof}

It is known that a space $Y \in \mathfrak{M}_2$ if and only if there exists an embedding from $Y$ into a completely metrizable space as an $F_{\sigma\delta}$-set,
 refer to \cite[Theorem~9.6]{AN}.

\begin{prop}\label{M2}
Suppose that $X$ is not a compact space.
Then the subset $EM(X)$ is $F_{\sigma\delta}$ in $PM(X)$,
 and hence it is in $\mathfrak{M}_2$.
\end{prop}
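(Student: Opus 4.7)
The plan is to combine Lemma~\ref{diam.} with Proposition~\ref{Fsigmadelta} by writing the ``diameter tends to zero'' condition as a countable intersection of countable unions of closed sets in $PM(X)$.

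First I would rewrite the condition $\lim_{n\to\infty}\diam_d(X\setminus X_n)=0$ in the standard $\forall\exists\forall$ form: a pseudometric $d$ satisfies it if and only if
\[
 \forall k\in\N\ \exists n\in\N\ \forall (x,y)\in (X\setminus X_n)^2,\quad d(x,y)\leq 1/k.
\]
This motivates defining, for each $n,k\in\N$,
\[
 B(n,k)=\{d\in PM(X)\mid \diam_d(X\setminus X_n)\leq 1/k\}
 =\bigcap_{(x,y)\in (X\setminus X_n)^2}\{d\in PM(X)\mid d(x,y)\leq 1/k\}.
\]
Since the compact-open topology on $C(X^2)$ makes every point-evaluation map $d\mapsto d(x,y)$ continuous, each set $\{d\in PM(X)\mid d(x,y)\leq 1/k\}$ is closed in $PM(X)$, and hence $B(n,k)$ is closed in $PM(X)$ as an intersection of closed sets.

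Next I would assemble the pieces. By Lemma~\ref{diam.},
\[
 EM(X)=AM(X)\cap\bigcap_{k\in\N}\bigcup_{n\in\N}B(n,k).
\]
For each $k$, the set $\bigcup_{n\in\N}B(n,k)$ is $F_\sigma$ in $PM(X)$, so $\bigcap_{k\in\N}\bigcup_{n\in\N}B(n,k)$ is $F_{\sigma\delta}$. Combining with Proposition~\ref{Fsigmadelta}, which gives that $AM(X)$ is $F_{\sigma\delta}$ in $PM(X)$, and using that the class of $F_{\sigma\delta}$-sets is closed under finite intersection, I conclude that $EM(X)$ is $F_{\sigma\delta}$ in $PM(X)$.

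Finally, to upgrade this to the statement $EM(X)\in\mathfrak{M}_2$, I would use that $C(X^2)$ is completely metrizable (noted in the paragraph before Lemma~\ref{diam.}), so the closed subspace $PM(X)$ is also completely metrizable. Then $EM(X)$ is embedded as an $F_{\sigma\delta}$-subset of a completely metrizable space, and the cited characterization \cite[Theorem~9.6]{AN} yields $EM(X)\in\mathfrak{M}_2$. I do not foresee a real obstacle here: the only content to check is the closedness of $B(n,k)$, which reduces to continuity of point-evaluations in the compact-open topology, and the Borel-class bookkeeping is routine.
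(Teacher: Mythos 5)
Your proposal is correct and follows essentially the same route as the paper: both decompose the condition $\lim_{n\to\infty}\diam_d(X\setminus X_n)=0$ from Lemma~\ref{diam.} as $\bigcap_{k}\bigcup_{n}\{d\mid\diam_d(X\setminus X_n)\leq 1/k\}$ with each piece closed, and then intersect with the $F_{\sigma\delta}$-set $AM(X)$ from Proposition~\ref{Fsigmadelta}. Your version merely spells out the closedness via continuity of point-evaluations and works with closed sets in $PM(X)$ rather than in $AM(X)$, which changes nothing of substance.
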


\begin{proof}
As is easily observed,
 $$\bigg\{d \in AM(X) \bigg| \lim_{n \to \infty} \diam_d(X \setminus X_n) = 0\bigg\} = \bigcap_{m \in \N} \bigcup_{n \in \N} \{d \in AM(X) \mid \diam_d(X \setminus X_n) \leq 1/m\},$$
 and for any $m, n \in \N$, $\{d \in AM(X) \mid \diam_d(X \setminus X_n) \leq 1/m\}$ is closed.
According to Lemma~\ref{diam.}, $EM(X)$ is an $F_{\sigma\delta}$-set in $AM(X)$.
Combining it with Proposition~\ref{Fsigmadelta}, we have that $EM(X)$ is $F_{\sigma\delta}$ in $PM(X)$.
\end{proof}

\section{The homotopy density of $EM(X)$ in $PM(X)$}

In this section, we shall prove that $EM(X)$ is homotopy dense in $PM(X)$.

\begin{prop}\label{dense}
Suppose that $X$ is not a compact space.
Then $EM(X)$ is dense in $PM(X)$.
\end{prop}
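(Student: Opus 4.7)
The plan is to show that every basic open neighborhood of an arbitrary $d \in PM(X)$ meets $EM(X)$. Such a basic neighborhood in the compact-open topology has the form
$$V = \{d' \in PM(X) \mid |d'(x,y) - d(x,y)| < \epsilon \text{ for all } (x,y) \in K\}$$
for some compact $K \subset X^2$ and some $\epsilon > 0$. Since $X = \bigcup_{n \in \N} X_n$ with $X_n \subset \intr{X_{n+1}}$, I can choose $n \in \N$ so that $K \subset X_n^2$.

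My strategy is to perturb $d|_{X_n^2}$ slightly so that it becomes an admissible metric on the compact space $X_n$, then extend it across $\alpha X$ using Hausdorff's metric extension theorem, and finally restrict back to $X$. Concretely, I will fix any admissible metric $\tau$ on $X_n$ with $\diam_\tau(X_n) \leq 1$ and set $\rho_1 = d|_{X_n^2} + (\epsilon/2)\tau$. This is a continuous metric on $X_n$ (positive definite because $\tau$ is), and since $X_n$ is compact it is admissible. Because $X_n$ is compact in the Hausdorff space $\alpha X$, it is closed there, so Hausdorff's theorem \cite{Hau} produces an admissible metric $\bar\rho$ on the compact metrizable space $\alpha X$ that extends $\rho_1$. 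Setting $d_0 = \bar\rho|_{X^2}$ then yields an element of $AM(X)$ which, by construction, admits $\bar\rho$ as an admissible extension to $\alpha X$; hence $d_0 \in EM(X)$.

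It will then remain to verify that $d_0 \in V$. For every $(x,y) \in K \subset X_n^2$ one has
$$|d_0(x,y) - d(x,y)| = |\rho_1(x,y) - d(x,y)| = (\epsilon/2)\tau(x,y) \leq \epsilon/2 < \epsilon,$$
so $d_0 \in EM(X) \cap V$, proving density. The only substantial ingredient is the appeal to Hausdorff's extension theorem — already invoked once in the paper in a similar context — and I do not anticipate any real obstacle; the reduction to a compact stage $X_n$ via the exhaustion $\{X_n\}$ is exactly what makes the compact-open topology cooperative here.
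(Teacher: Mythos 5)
Your argument is correct. It reaches the same destination by the same key tool --- Hausdorff's metric extension theorem applied to a closed subset of the compact metrizable space $\alpha X$ --- but it is organized differently and is somewhat more self-contained. The paper factors the claim into two stages: first the density of $AM(X)$ in $PM(X)$, for which it defers to the argument of Proposition~1 of \cite{Kos16}, and then the density of $EM(X)$ in $AM(X)$, obtained by extending the restriction of an already-admissible metric from $K\cup\{x_\infty\}$ (with a prescribed distance $d(\cdot,x_0)+1$ to the point at infinity) over all of $\alpha X$. You instead start from an arbitrary continuous pseudometric and handle both issues in one step: the perturbation $d|_{X_n^2}+(\epsilon/2)\tau$ turns the pseudometric into a continuous metric on the compact stage $X_n$, hence an admissible one, and Hausdorff's theorem applied to the closed set $X_n\subset\alpha X$ (with no need to prescribe the distance to $x_\infty$) does the rest. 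What your route buys is the elimination of the external citation for the first stage; what the paper's route buys is reusability of the intermediate statement ``$AM(X)$ is dense in $PM(X)$'' and of the explicit control over $d(\cdot,x_\infty)$, a device it needs again in Propositions~\ref{Zsigma} and the surrounding $Z$-set arguments. All the small points in your write-up check out: a compact $K\subset X^2$ does sit inside some $X_n^2$ because the $\intr X_{n+1}$ form an increasing open cover, a continuous metric on a compact space is automatically admissible, and $X_n$ is closed in the Hausdorff space $\alpha X$, so Hausdorff's theorem applies as you claim.
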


\begin{proof}
According to the same argument as Proposition~1 of \cite{Kos16}, $AM(X)$ is dense in $PM(X)$.
It remains to show that $EM(X)$ is dense in $AM(X)$.
For each $d \in AM(X)$ and each neighborhood $U$ of $d$, we shall construct a metric $\rho \in EM(X)$ such that $\rho \in U$.
There is a compact subset $K \subset X$ such that if $d|_{K^2} = \rho|_{K^2}$,
 then $\rho \in U$.
Define an admissible metric $\rho|_{(K \cup \{x_\infty\})^2}$ on $K \cup \{x_\infty\}$ as follows:
 $$\rho(x,y) = \left\{
 \begin{array}{ll}
 d(x,y) &\text{if } x, y \in K,\\
 d(x,x_0) + 1 &\text{if } x \in K \text{ and } y = x_\infty,\\
 d(y,x_0) + 1 &\text{if } x = x_\infty \text{ and } y \in K,\\
 0 &\text{if } x = y = x_\infty.
 \end{array}
 \right.$$
Due to Hausdorff's metric extension theorem \cite{Hau}, the above metric can be extended to an admissible metric $\rho$ on $\alpha X$.
The restriction $\rho|_{X^2} \in EM(X)$ is the desired admissible metric such that $\rho|_{X^2} \in U$.
The proof is complete.
\end{proof}

Applying Lemma~\ref{diam.}, we will verify the convexity of $EM(X)$.

\begin{prop}\label{cvx.}
If $X$ is not a compact space,
 then $EM(X)$ is a convex subset of $C(X^2)$.
Moreover, if $X$ is locally connected,
 then it is an AR.
\end{prop}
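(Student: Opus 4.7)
The plan is to combine the representation of $EM(X)$ from Lemma~\ref{diam.} with the fact, recorded in the introduction, that $AM(X)$ is a convex positive cone in $C(X^2)$. Fix $d_1, d_2 \in EM(X)$ and $t \in [0,1]$, and set $d = td_1 + (1-t)d_2$. Convexity of $AM(X)$ immediately yields $d \in AM(X)$, so by Lemma~\ref{diam.} it only remains to verify $\lim_{n\to\infty} \diam_d(X\setminus X_n) = 0$.

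For this I would use the pointwise subadditivity of diameter: for any $A \subset X$ and $x, y \in A$ one has $d(x,y) \leq t\diam_{d_1}(A) + (1-t)\diam_{d_2}(A)$, so that $\diam_d(A) \leq t\diam_{d_1}(A) + (1-t)\diam_{d_2}(A)$. Applying this to $A = X\setminus X_n$ and using $d_i \in EM(X)$ (equivalently, $\diam_{d_i}(X\setminus X_n) \to 0$ by Lemma~\ref{diam.}) gives the required tail estimate, so $d \in EM(X)$ and the convexity assertion follows.

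For the absolute retract assertion, if $X$ is locally connected, then, as noted in the introduction, $C(X^2)$ with the compact-open topology is a Fr\'{e}chet space, i.e.\ a metrizable locally convex topological vector space. The subspace $EM(X)$ is convex by the preceding argument, and a classical theorem of Dugundji ensures that every convex subset of a metrizable locally convex space is an absolute extensor for metrizable spaces, hence an AR since $EM(X)$ is itself metrizable. No real obstacle is anticipated here: the convexity reduces to a one-line diameter inequality combined with Lemma~\ref{diam.}, and the AR conclusion is a standard consequence of convexity inside a Fr\'{e}chet ambient space.
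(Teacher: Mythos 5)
Your proposal is correct and follows essentially the same route as the paper: convexity of $AM(X)$ plus the subadditivity estimate $\diam_{td_1+(1-t)d_2}(X\setminus X_n)\leq t\diam_{d_1}(X\setminus X_n)+(1-t)\diam_{d_2}(X\setminus X_n)$ combined with Lemma~\ref{diam.}, and then Dugundji's theorem for convex subsets of the Fr\'{e}chet space $C(X^2)$ to get the AR conclusion. No gaps.
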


\begin{proof}
To prove that $EM(X)$ is convex in $AM(X)$, take any $d, \rho \in EM(X)$ and any $t \in [0,1]$.
By the convexity of $AM(X)$, $(1 - t)d + t\rho \in AM(X)$.
Then for every $n \in \N$,
\begin{align*}
 \diam_{(1 - t)d + t\rho}(X \setminus X_n) &= \sup_{(x,y) \in (X \setminus X_n)^2} ((1 - t)d(x,y) + t\rho(x,y))\\
 &\leq (1 - t)\sup_{(x,y) \in (X \setminus X_n)^2} d(x,y) + t\sup_{(x,y) \in (X \setminus X_n)^2} \rho(x,y)\\
 &= (1 - t)\diam_d(X \setminus X_n) + t\diam_\rho(X \setminus X_n).
\end{align*}
Due to Lemma~\ref{diam.}, $\diam_d(X \setminus X_n) \to 0$ and $\diam_\rho(X \setminus X_n) \to 0$ as $n \to \infty$,
 and hence $\diam_{(1 - t)d + t\rho}(X \setminus X_n)$ is also converging to $0$.
Using Lemma~\ref{diam.} again, we get that $(1 - t)d + t\rho \in EM(X)$.
Therefore $EM(X)$ is convex in $AM(X)$,
 and hence so is in $C(X^2)$.
When $X$ is locally connected,
 $C(X^2)$ is a Fr\'{e}chet space.
Then the latter part holds.
\end{proof}

We can show the following proposition.

\begin{prop}\label{homot.dense}
Let $X$ be a locally connected but not compact space.
Then $EM(X)$ is homotopy dense in $PM(X)$.
\end{prop}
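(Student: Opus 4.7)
The plan is to construct an explicit homotopy $h \colon PM(X) \times [0,1] \to PM(X)$ with $h(d, 0) = d$ for every $d$ and $h(d, t) \in EM(X)$ for every $t \in (0, 1]$. First I would fix a reference metric $\sigma \in EM(X)$ whose extension $\bar\sigma \in AM(\alpha X)$ satisfies $\bar\sigma \leq 1$ (such $\sigma$ exists since $\alpha X$ is compact metrizable; cf.\ Proposition~\ref{dense}).

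For $d \in PM(X)$ and $t \in (0,1]$, define
\[
h(d, t)(x, y) = \tilde d_t(x, y) + t\,\sigma(x, y),
\]
where $\tilde d_t$ is the chain-infimum pseudometric
\[
\tilde d_t(x, y) = \inf \bigg\{ \sum_{i=0}^{n-1} \min\bigl\{d(x_i, x_{i+1}),\, t^{-1}\sigma(x_i, x_{i+1})\bigr\} : n \in \N,\ x_0 = x,\ x_n = y \bigg\},
\]
that is, the largest pseudometric on $X$ dominated by $\min\{d, t^{-1}\sigma\}$; and set $h(d, 0) = d$. Since $\tilde d_t \leq t^{-1}\sigma$ from the trivial one-step chain, the induced metric topology of $\tilde d_t$ is coarser than the topology of $X$, so $\tilde d_t$ is continuous on $X^2$ and $h(d, t) \in PM(X)$. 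For $t > 0$, the bound $h(d, t) \geq t\sigma$ makes $h(d, t)$ a metric whose topology is squeezed between that of $\sigma$ and that of $X$ (which coincide by admissibility of $\sigma$), so $h(d, t)$ is admissible. Moreover,
\[
\diam_{h(d, t)}(X \setminus X_n) \leq (t^{-1} + t)\,\diam_\sigma(X \setminus X_n) \;\longrightarrow\; 0 \qquad (n \to \infty),
\]
so by Lemma~\ref{diam.} one has $h(d, t) \in EM(X)$.

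Continuity of $h$ in $d$ at a fixed $t > 0$ follows from the observation that near-optimal chains can be confined to a fixed $\bar\sigma$-bounded subset of $X$, which is relatively compact in $\alpha X$ and hence compact in $X$; on such a subset, small perturbations of $d$ in the compact-open topology propagate to small perturbations of the chain sums. The main obstacle is continuity at $t = 0^+$: one must show $\tilde d_t \to d$ uniformly on each compact $K \subset X^2$. The upper bound $\tilde d_t \leq d$ is immediate from the trivial chain. For the lower bound, write $r(x) := \bar\sigma(x, x_\infty)$, so that $r \geq r_K > 0$ on the projection of $K$. A chain step $(x_i, x_{i+1})$ contributes strictly less than $d(x_i, x_{i+1})$ only when $t^{-1}\sigma(x_i, x_{i+1}) < d(x_i, x_{i+1})$, which for small $t$ forces both endpoints to have small $r$-value, i.e., to lie in an arbitrarily small $\bar\sigma$-neighborhood of $x_\infty$. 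Any boundary step of the chain connecting a point of $K$ to such an outer region must then pay essentially the full $d$-distance, so the triangle inequality for $d$ along the chain, combined with careful bookkeeping of the savings against these boundary contributions and the precompactness of $\bar\sigma$-balls in $\alpha X$, yields $\sum \min\{d_i, t^{-1}\sigma_i\} \geq d(x, y) - \epsilon$ uniformly in $(x, y) \in K$ for all $t$ sufficiently small.
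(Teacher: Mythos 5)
Your construction breaks down, and not just at the level of an unfinished estimate: the homotopy you define is genuinely discontinuous at $t=0$. The flaw is in the claim that a near-optimal chain must ``pay essentially the full $d$-distance'' to reach the region where $\sigma$ is small relative to $t\,d$. Since $d$ is only a continuous pseudometric and the compact-open topology gives no uniform comparison between $d$ and $\sigma$ outside compact sets, a chain can travel out toward $x_\infty$ at essentially zero $d$-cost and perform its large $d$-jumps there, where $\sigma$ is tiny. Concretely, take $X=[0,\infty)$, $\sigma(x,y)=|e^{-x}-e^{-y}|$ (which lies in $EM(X)$ with $\bar\sigma\le 1$), and $d(x,y)=|f(x)-f(y)|$ where $f$ is continuous with $f(0)=0$, $f(1)=1$, $f(2n)=0$ and $f(2n+1)=1$ for all $n\ge 5$ (a sum of tent functions). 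Then $d\in PM(X)$ and $d(0,1)=1$, but the chain $0\to 2n\to 2n+1\to 1$ costs
\[
\min\{d(0,2n),t^{-1}\sigma(0,2n)\}+\min\{1,t^{-1}\sigma(2n,2n+1)\}+\min\{d(2n+1,1),t^{-1}\sigma(2n+1,1)\}
= 0+t^{-1}(e^{-2n}-e^{-2n-1})+0,
\]
which tends to $0$ as $n\to\infty$ for every fixed $t>0$. Hence $\tilde d_t(0,1)=0$ for all $t>0$, so $h(d,t)(0,1)=t\sigma(0,1)\to 0\neq 1=d(0,1)$, and $h$ is not continuous at $(d,0)$. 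The same example shows that near-optimal chains are not confined to any compact subset of $X$, so your continuity argument in the $d$-variable for fixed $t>0$ also fails. (Adding a small admissible metric to $d$ shows the problem persists even for $d\in AM(X)$.)

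For comparison, the paper does not attempt any explicit homotopy. It observes that $EM(X)$ is a dense convex subset of the convex set $PM(X)$ in the Fr\'echet space $C(X^2)$ (Propositions~\ref{dense} and \ref{cvx.}) and then invokes the general fact that such a dense convex subset is homotopy dense (Theorem~6.8.9 and Corollary~6.8.5 of Sakai's book). If you want a constructive route, the convexity is the mechanism to exploit: a homotopy of the form $h(d,t)=(1-\lambda(t))d+\lambda(t)\rho_t$ with $\rho_t\in EM(X)$ chosen by a partition-of-unity/density argument stays inside $EM(X)$ for $t>0$ because $EM(X)$ is convex and, by Lemma~\ref{diam.}, absorbs convex combinations with pseudometrics; a truncation-by-minimum construction does not respect the triangle inequality globally and forces exactly the chain pathology above.
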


\begin{proof}
The convex subset $PM(X) \subset C(X^2)$ contains $EM(X)$ as a dense convex subset by Propositions~\ref{dense} and \ref{cvx.}.
Combining Theorem~6.8.9 with Corollary~6.8.5 of \cite{Sakaik11}, we have that $EM(X)$ is homotopy dense in $PM(X)$.
\end{proof}

\section{The $Z_\sigma$-set property of $EM(X)$ in $PM(X)$}

This section is devoted to proving that $EM(X)$ is contained in some $Z_\sigma$-set in $PM(X)$.
For a compact metric space $Y = (Y,d_Y)$, we shall consider the set of partial pseudometrics on compact sets in $Y$.
Let
 $$PPM(Y) = \bigcup \{PM(A) \mid A \in \cpt(Y)\}$$
 whose topology is defined as follows:
Identifying each partial pseudometrics $d \in PPM(Y)$ with its graph
 $$\{(x,y,d(x,y)) \mid x, y \in \dom{d}\} \in \cpt(Y \times Y \times \R),$$
 where the symbol $\dom{d} \in \cpt(Y)$ stands for the domain of $d$,
 we can regard $PPM(Y)$ as a subspace of $\cpt(Y \times Y \times \R)$.
Note that the Hausdorff metric $(d_{Y \times Y \times \R})_H$ is admissible on $PPM(Y)$.
Here set
 $$PAM(Y) = \bigcup \{AM(A) \mid A \in \cpt(Y) \text{ and } A \text{ is non-degenerate}\} \subset PPM(Y).$$

\begin{lem}\label{conti.}
Let $Y = (Y,d_Y)$ be a compact metric space, $U$ be an open subset of $Y$, $K$ be a closed subset of $Y$,
 and $y_0 \in U$ and $y_\infty \in K$ be distinct points.
Suppose that $Z$ is a space,
 and $f : Z \to \cpt(Y)$, $g : Z \to \cpt(Y)$ and $h : Z \to [0,\infty)$ are maps such that $y_0 \in f(z) \subset U$, $y_\infty \in g(z) \subset K$ and $f(z) \cap g(z) = \emptyset$ for every $z \in Z$.
Moreover, let $d_0 : Z \to PM(U)$ and $d_\infty : Z \to PM(K)$ be maps.
Then the function $\Phi : Z \to PPM(Y)$ is continuous,
 which is defined by
\begin{enumerate}
 \item $\dom{\Phi(z)} = f(z) \cup g(z)$;
 \item $\Phi(z)(x,y) = \left\{
 \begin{array}{ll}
 d_0(z)(x,y) &\text{if } x, y \in f(z),\\
 d_\infty(z)(x,y) &\text{if } x, y \in g(z),\\
 d_0(z)(x,y_0) + d_\infty(z)(y,y_\infty) + h(z) &\text{if } x \in f(z) \text{ and } y \in g(z),\\
 d_0(z)(y,y_0) + d_\infty(z)(x,y_\infty) + h(z) &\text{if } x \in g(z) \text{ and } y \in f(z).
 \end{array}
 \right.$
\end{enumerate}
\end{lem}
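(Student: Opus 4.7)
The plan is to verify that $\Phi(z)$ really lands in $PPM(Y)$, and then to establish continuity by splitting the graph of $\Phi(z)$ into four pieces that each vary continuously in the Hausdorff topology on $\cpt(Y \times Y \times \R)$.

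For the first point, the hypothesis $f(z) \cap g(z) = \emptyset$ makes the four clauses in the definition of $\Phi(z)$ unambiguous, and symmetry and vanishing on the diagonal are immediate. The triangle inequality reduces, by a case analysis on which of the two sets $f(z)$ and $g(z)$ contains each of the three points, to the triangle inequalities for $d_0(z)$ and $d_\infty(z)$ together with non-negativity of the added cross term $d_0(z)(\cdot,y_0) + d_\infty(z)(\cdot,y_\infty) + h(z)$. I would dispose of these cases briefly at the start.

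For continuity, recall that $PPM(Y)$ sits in $\cpt(Y \times Y \times \R)$ through the graph map. The graph of $\Phi(z)$ is the union $G_1(z) \cup G_2(z) \cup G_3(z) \cup G_4(z)$ corresponding to the four clauses, namely the restriction of the graph of $d_0(z)$ to $f(z)^2$, the restriction of the graph of $d_\infty(z)$ to $g(z)^2$, and the two cross pieces over $f(z) \times g(z)$ and $g(z) \times f(z)$. Since finite union is a continuous operation on Vietoris hyperspaces, it suffices to prove that each $z \mapsto G_i(z) \in \cpt(Y \times Y \times \R)$ is continuous.

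Each $G_i(z)$ has the form $\psi_i(\{z\} \times F_i(z))$, where $F_i$ is a continuous map from $Z$ to $\cpt(Y \times Y)$ built from $f$ and $g$ by taking products, and $\psi_i$ is a continuous map from $Z$ times an appropriate locally compact subset of $Y \times Y$ (for instance $Z \times U \times U$ for $i = 1$ and $Z \times U \times K$ for $i = 3$) into $Y \times Y \times \R$. These continuities come from the hypotheses on $f, g, d_0, d_\infty, h$ together with the fact that the evaluation $PM(U) \times U \times U \to \R$ is continuous, because $U$ is locally compact as an open subset of the compact metric space $Y$. Invoking the standard principle that $z \mapsto \psi(\{z\} \times F(z))$ is continuous into $\cpt(B)$ whenever $\psi : Z \times A \to B$ and $F : Z \to \cpt(A)$ are continuous completes the argument. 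The main obstacle is precisely this joint continuity of the evaluations of $d_0(z)$ and $d_\infty(z)$; once that is secured, everything else is mechanical triangle-inequality bookkeeping combined with Vietoris-hyperspace functoriality.
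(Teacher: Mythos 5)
Your argument is correct, but it takes a genuinely different route from the paper's. The paper proves continuity by a direct Hausdorff-metric estimate: fixing $z$ and $\epsilon>0$, it uses the compactness of $f(z)$ and $g(z)$ to extract uniform-continuity moduli for $d_0(z)$ on a compact neighbourhood $\overline{B}_{d_Y}(f(z),\delta)\subset U$ and for $d_\infty(z)$ on $K$, shrinks to a neighbourhood $W$ of $z$ on which $f$, $g$, $h$, $d_0$, $d_\infty$ move by prescribed amounts, and then matches points of the graphs of $\Phi(z)$ and $\Phi(w)$ case by case over the four clauses to bound $(d_{Y\times Y\times\R})_H(\Phi(z),\Phi(w))<\epsilon$. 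You instead factor the construction through hyperspace functoriality: the graph is a union of four pieces $G_i(z)=\psi_i(\{z\}\times F_i(z))$, and continuity follows from the continuity of the product and union operations on $\cpt(\,\cdot\,)$, of the induced map $\cpt(\psi_i)$, and of the evaluation $C(A)\times A\to\R$ for locally compact $A$ --- which is precisely where the hypotheses that $U$ is open and $K$ is closed in the compact space $Y$ enter. Both proofs are complete. Yours is shorter and more conceptual, and it cleanly isolates the role of local compactness; the price is reliance on standard hyperspace facts (all available in van Mill's book, which the paper already cites for unions) and on your ``standard principle,'' which is most economically justified by composing the continuous map $z\mapsto\{z\}\times F_i(z)\in\cpt(Z\times A)$ with the induced map $\cpt(\psi_i):\cpt(Z\times A)\to\cpt(B)$, rather than by a separate tube-lemma argument. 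The paper's proof is longer but entirely self-contained at the level of $\epsilon$--$\delta$ estimates. If you write yours up, do make explicit that each $F_i$ lands in the hyperspace of the relevant locally compact set (e.g.\ $F_1(z)=f(z)^2\subset U^2$), so that $\{z\}\times F_i(z)$ lies in the domain of $\psi_i$; this is immediate from $f(z)\subset U$ and $g(z)\subset K$ but is needed for the principle to apply.
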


\begin{proof}
As is easily observed,
 $\Phi(z) \in PPM(Y)$ for any $z \in Z$.
To verify the continuity of $\Phi$, take any $z \in Z$ and $\epsilon > 0$.
Since $f(z)$ is compact,
 there is $\delta \in (0,\epsilon)$ such that $\overline{B}_{d_Y}(f(z),\delta) \subset U$.
Combinig the continuity of $d_0(z)$ and $d_\infty(z)$ with the compactness of $f(z)$ and $g(z)$, we can assume that for each $(x,y) \in f(z)^2$ and each $(x',y') \in \overline{B}_{d_Y}(x,\delta) \times \overline{B}_{d_Y}(y,\delta)$, $|d_0(z)(x,y) - d_0(z)(x',y')| < \epsilon/6$,
 and for each $(x,y) \in g(z)^2$ and each $(x',y') \in (\overline{B}_{d_Y}(x,\delta) \cap K) \times (\overline{B}_{d_Y}(y,\delta) \cap K)$, $|d_\infty(z)(x,y) - d_\infty(z)(x',y')| < \epsilon/6$.
Since $f$, $g$ and $h$ are continuous,
 we can choose a neighborhood $V \subset Z$ of $z$ so that if $w \in V$,
 then $(d_Y)_H(f(z),f(w)) \leq \delta$, $(d_Y)_H(g(z),g(w)) \leq \delta$ and $|h(z) - h(w)| < \epsilon/3$.
Moreover, take a neighborhood $W \subset V$ of $z$ such that for any $w \in W$, if $(x,y) \in \overline{B}_{d_Y}(f(z),\delta)^2$,
 then $|d_0(z)(x,y) - d_0(w)(x,y)| < \epsilon/6$,
 and if $(x,y) \in K^2$,
 then $|d_\infty(z)(x,y) - d_\infty(w)(x,y)| < \epsilon/6$.
Observe that for each $w \in W$, $(d_{Y \times Y \times \R})_H(\Phi(z),\Phi(w)) < \epsilon$.
Indeed, let any $(x,y,\Phi(z)(x,y)) \in \Phi(z)$.
When $(x,y) \in f(z)^2$,
 there exists a point $(x',y') \in f(w)^2$ such that
 $$d_{Y^2}((x,y),(x',y')) = \max\{d_Y(x,x'),d_Y(y,y')\} \leq \delta < \epsilon$$
 because $(d_Y)_H(f(z),f(w)) \leq \delta$.
Note that $|d_0(z)(x,y) - d_0(z)(x',y')| < \epsilon/6$.
Moreover, $(x',y') \in \overline{B}_{d_Y}(f(z),\delta)^2$,
 and hence $|d_0(z)(x',y') - d_0(w)(x',y')| < \epsilon/6$.
Observe that
\begin{multline*}
 |\Phi(z)(x,y) - \Phi(w)(x',y')| = |d_0(z)(x,y) - d_0(w)(x',y')|\\
 \leq |d_0(z)(x,y) - d_0(z)(x',y')| + |d_0(z)(x',y') - d_0(w)(x',y')| < \epsilon/6 + \epsilon/6 < \epsilon.
\end{multline*}
Thus we have that $(x',y',\Phi(w)(x',y')) \in \Phi(w)$ and
\begin{multline*}
 d_{Y \times Y \times \R}((x,y,\Phi(z)(x,y)),(x',y',\Phi(w)(x',y')))\\
 = \max\{d_{Y^2}((x,y),(x',y')),|\Phi(z)(x,y) - \Phi(w)(x',y')|\} < \epsilon.
\end{multline*}
Similarly, when $(x,y) \in g(z)^2$,
 there exists a point $(x',y') \in g(w)^2$ such that $(x',y',\Phi(w)(x',y')) \in \Phi(w)$ and
 $$d_{Y \times Y \times \R}((x,y,\Phi(z)(x,y)),(x',y',\Phi(w)(x',y'))) < \epsilon.$$
When $x \in f(z)$ and $y \in g(z)$,
 we can find points $x' \in f(w)$ and $y' \in g(w)$ so that $d_Y(x,x') \leq \delta < \epsilon$ and $d_Y(y,y') \leq \delta < \epsilon$ since $(d_Y)_H(f(z),f(w)) \leq \delta$ and $(d_Y)_H(g(z),g(w)) \leq \delta$.
Recall that $|d_0(z)(x,y_0) - d_0(z)(x',y_0)| < \epsilon/6$ and $|d_\infty(z)(y,y_\infty) - d_\infty(z)(y',y_\infty)| < \epsilon/6$.
Furthermore, $|d_0(z)(x',y_0) - d_0(w)(x',y_0)| < \epsilon/6$ and $|d_\infty(z)(y',y_\infty) - d_\infty(w)(y',y_\infty)| < \epsilon/6$ because $(x',y_0) \in \overline{B}_Y(f(z),\delta)^2$ and $(y',y_\infty) \in K^2$.
Then
\begin{align*}
 &|\Phi(z)(x,y) - \Phi(w)(x',y')|\\
 &\ \ \ \ = |(d_0(z)(x,y_0) + d_\infty(z)(y,y_\infty) + h(z)) - (d_0(w)(x',y_0) + d_\infty(w)(y',y_\infty) + h(w))|\\
 &\ \ \ \ \leq |d_0(z)(x,y_0) - d_0(z)(x',y_0)| + |d_0(z)(x',y_0) - d_0(w)(x',y_0)|\\
 &\ \ \ \ \ \ \ \ + |d_\infty(z)(y,y_\infty) - d_\infty(z)(y',y_\infty)| + |d_\infty(z)(y',y_\infty) - d_\infty(w)(y',y_\infty)| + |h(z) - h(w)|\\
 &\ \ \ \ < \epsilon/6 + \epsilon/6 + \epsilon/6 + \epsilon/6 + \epsilon/3 = \epsilon.
\end{align*}
Therefore we have
\begin{multline*}
 d_{Y \times Y \times \R}((x,y,\Phi(z)(x,y)),(x',y',\Phi(w)(x',y')))\\
 = \max\{d_{Y^2}((x,y),(x',y')),|\Phi(z)(x,y) - \Phi(w)(x',y')|\} < \epsilon.
\end{multline*}
Similarly, when $x \in g(z)$ and $y \in f(z)$,
 there are points $x' \in g(w)$ and $y' \in f(w)$ such that $(x',y',\Phi(w)(x',y')) \in \Phi(w)$ and
 $$d_{Y \times Y \times \R}((x,y,\Phi(z)(x,y)),(x',y',\Phi(w)(x',y'))) < \epsilon.$$
It follows that $\Phi(z) \subset B_{d_{Y \times Y \times \R}}(\Phi(w),\epsilon)$.
By the same argument as the above, we can see that $\Phi(w) \subset B_{d_{Y \times Y \times \R}}(\Phi(z),\epsilon)$.
Consequently, $(d_{Y \times Y \times \R})_H(\Phi(z),\Phi(w)) < \epsilon$,
 which implies that $\Phi$ is continuous.
\end{proof}

\begin{remark}
In the above lemma, for each $z \in Z$, if $d_0(z)$ and $d_\infty(z)$ are admissible on $f(z)$ and $g(z)$ respectively, and $h(z) > 0$,
 then $\Phi(z) \in PAM(Y)$.
\end{remark}

We will give a useful path on $\cpt(X)$ for the latter argument.

\begin{lem}
Let $X$ be connected and locally connected.
Then there exists a map $\xi : [0,\infty) \to \cpt(X)$ satisfying the following conditions:
\begin{enumerate}
 \item $X = \bigcup \xi([0,\infty))$;
 \item For any $0 \leq s \leq t < \infty$, $\xi(s) \subset \xi(t)$;
 \item For each $n \in \N$, $\xi(n) \subset \intr{\xi(n + 1)}$.
\end{enumerate}
\end{lem}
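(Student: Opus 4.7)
The plan is to exhaust $X$ by a nested sequence of compact connected subsets and interpolate between successive members via order arcs in the hyperspace. For the exhaustion, I would build $X_1 \subset X_2 \subset \cdots$ with each $X_n$ a subcontinuum of $X$, $X_n \subset \intr X_{n+1}$, and $\bigcup_{n \in \N} X_n = X$. Since $X$ is separable, locally compact and locally connected, it admits a countable base $\{B_n \mid n \in \N\}$ of connected open sets with compact closures. I set $U_1 = B_1$ and, at stage $n \geq 2$, enlarge $U_{n-1}$ to a connected open set $U_n$ of compact closure by adjoining to $\cl U_{n-1}$ the base element $B_n$ together with finitely many further base elements needed to re-link the components; this is possible because $X$, being connected, locally connected and locally compact, is arcwise connected. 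Setting $X_n = \cl U_n$ yields the desired exhausting sequence of subcontinua.

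Next, for each $n \geq 1$, since $X_{n+1}$ is a metric continuum and $X_n$ is a proper closed subcontinuum of it, the classical theory of order arcs in $\cpt(X_{n+1})$ (constructed from a Whitney map by a Zorn-style chain argument; see, e.g., Nadler's \emph{Hyperspaces of Sets}) supplies a continuous monotone path $\gamma_n : [n, n+1] \to \cpt(X_{n+1})$ with $\gamma_n(n) = X_n$, $\gamma_n(n+1) = X_{n+1}$ and $\gamma_n(r) \subset \gamma_n(r')$ whenever $r \leq r'$. I then define $\xi(t) = X_1$ for $t \in [0,1]$ and $\xi(t) = \gamma_n(t)$ for $t \in [n, n+1]$ with $n \geq 1$. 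The definitions agree at every positive integer, so $\xi : [0,\infty) \to \cpt(X)$ is well-defined and continuous; monotonicity follows from the monotonicity of each $\gamma_n$ together with $\gamma_n(t) \subset X_{n+1} = \gamma_{n+1}(n+1) \subset \gamma_{n+1}(s)$ for $t \in [n, n+1]$ and $s \in [n+1, n+2]$. Conditions (1) and (3) are then immediate from $\bigcup_{n \in \N} X_n = X$ and $X_n \subset \intr X_{n+1}$.

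I expect the main obstacle to be the construction of the monotone paths $\gamma_n$: a naive candidate such as $\gamma_n(t) = X_n \cup \{x \in X_{n+1} \mid d(x, X_n) \leq t - n\}$ (for an admissible metric $d$ on $X$) can fail to be continuous in the Hausdorff metric when the ``annulus'' $X_{n+1} \setminus X_n$ is poorly attached to $X_n$, and the proper remedy is the Whitney-map/order-arc machinery from hyperspace theory. The auxiliary step of arranging a compact connected exhaustion is routine but also needs some care, since the closure of a connected open set need not inherit nice separation from neighboring pieces unless one actively re-links the components at each stage.
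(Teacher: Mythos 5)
Your argument is correct, but it follows a genuinely different route from the paper's. The paper takes an arbitrary exhaustion $X = \bigcup_{n} X_n$ by compact, \emph{not necessarily connected}, sets with $X_n \subset \intr{X_{n+1}}$, observes via Proposition~\ref{hyp.} that $\cpt(X)$ is connected, locally connected and locally compact and hence path-connected, picks any path $\xi_1$ in $\cpt(X)$ from a singleton up to a suitable $X_m$, and then \emph{monotonizes} it by passing to the cumulative union $t \mapsto \bigcup \xi_1([0,t])$, which is continuous because $\mathcal{A} \mapsto \bigcup \mathcal{A}$ is a continuous map $\cpt(\cpt(X)) \to \cpt(X)$. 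That trick makes monotonicity automatic and requires neither a connected exhaustion nor any order-arc machinery. You instead first upgrade the exhaustion to one by subcontinua --- which costs you the arcwise connectedness of $X$ and the re-linking argument you sketch --- and then interpolate with order arcs in $C(X_{n+1})$ parametrized by a Whitney map; this is a sound classical alternative (the order arc from $X_n$ to $X_{n+1}$ exists because $X_{n+1}$ is a continuum containing the subcontinuum $X_n$), and it yields the slightly stronger conclusion that every $\xi(t)$ is connected, which the paper does not need. Two small points to tidy: dispose of the degenerate cases $X = \emptyset$ and $X_n = X_{n+1}$ (the latter forces $X_n = X$ by connectedness, where a constant path suffices), and note that the union of finitely many connected basic open sets each meeting the connected set $\cl{U_{n-1}}$ is automatically connected, so your re-linking step is only needed to attach $B_n$.
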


\begin{proof}
We will prove this lemma in the case where $X \neq \emptyset$.
Write $X = \bigcup_{n \in \N} X_n$,
 where each $X_n$ is compact and $X_n \subset \intr{X_{n + 1}}$.
We may assume that $X_1 \neq \emptyset$,
 so choose any point $x_0 \in X_1$.
According to Proposition~\ref{hyp.}, the hyperspace $\cpt(X)$ is connected, locally connected and locally compact metrizable,
 and hence it is path-connected by Theorem~5.14.5 of \cite{Sakaik11}.
Now we shall inductively construct a map $\xi : [0,\infty) \to \cpt(X)$ so that $X_n \subset \xi(n)$ for each $n \in \N$.
Firstly, since $\cpt(X)$ is path-connected,
 there is a path $\xi_1 : [0,1] \to \cpt(X)$ such that $\xi_1(0) = \{x_0\}$ and $\xi_1(1) = X_1$.
Then we can define a map $\xi_1' : [0,1] \to \cpt(\cpt(X))$ by $\xi_1'(t) = \xi_1([0,t])$ for all $t \in [0,1]$.
Moreover, by virtue of \cite[Lemma~1.11.6 and Proposition~1.11.7]{Mil3}, we can obtain a map $\xi_1'' : [0,1] \to \cpt(X)$ defined by $\xi_1''(t) = \bigcup \xi'(t)$ for each $t \in [0,1]$.
As is easily observed,
 for any $0 \leq s \leq t \leq 1$, $\xi_1''(s) \subset \xi_1''(t)$ and $X_1 \subset \xi_1''(1)$.
Then let $\xi|_{[0,1]} = \xi_1''$.
Secondly, assume that $\xi|_{[0,n]}$ is obtained for some $n \in \N$.
Because $\xi(n)$ is compact,
 there exists $m \geq n + 1$ such that $\xi(n) \subset \intr{X_m}$.
Due to the same argument as the above, extend $\xi|_{[0,n]}$ over $[0,n + 1]$ so that $X_m \subset \xi(n + 1)$.
By the inductive construction, we can get the desired map $\xi$.
\end{proof}

From now on, the map $\xi$ is as the above lemma and let $X_n = \xi(n)$ for each $n \in \N$.
Moreover, if $X$ is not empty,
 then fix a point $x_0 \in X_1$.
We shall define a distance $D$ between real-valuded functions on $X^2$ as follows:
 $$D(f,g) = \sup_{n \in \N} \min\bigg\{\sup_{(x,y) \in X_n^2}|f(x,y) - g(x,y)|, 1/n\bigg\}$$
 for any $f, g : X^2 \to \R$.
Note that $D$ is an admissible metric on $C(X^2)$, refer to \cite[1.1.3(7)]{Sakaik11}.
We can see the following:

\begin{lem}\label{approx.}
Suppose that $X$ is connected and locally connected,
 and that $\epsilon > 0$.
For any $f, g \in C(X^2)$, if $f|_{\xi(1/\epsilon)^2} = g|_{\xi(1/\epsilon)^2}$,
 then $D(f,g) \leq \epsilon$.
\end{lem}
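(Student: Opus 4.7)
The plan is to bound each term inside the supremum defining $D(f,g)$ by $\epsilon$, by splitting on whether the index $n$ is at most $1/\epsilon$ or exceeds it. Fix $f, g \in C(X^2)$ with $f|_{\xi(1/\epsilon)^2} = g|_{\xi(1/\epsilon)^2}$, and let $n \in \N$ be arbitrary. I want to show that
\[
\min\bigl\{\textstyle\sup_{(x,y) \in X_n^2} |f(x,y) - g(x,y)|,\ 1/n\bigr\} \leq \epsilon.
\]

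First, I would handle the case $n \leq 1/\epsilon$. Here, by the monotonicity clause (2) of the preceding lemma (with $s = n$ and $t = 1/\epsilon$), we get $X_n = \xi(n) \subset \xi(1/\epsilon)$, hence $X_n^2 \subset \xi(1/\epsilon)^2$. The hypothesis $f|_{\xi(1/\epsilon)^2} = g|_{\xi(1/\epsilon)^2}$ then forces $\sup_{(x,y) \in X_n^2} |f(x,y) - g(x,y)| = 0$, so the minimum is $0 \leq \epsilon$.

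Next, I would handle the case $n > 1/\epsilon$. Here $1/n < \epsilon$, so the minimum is bounded above by $1/n < \epsilon$ regardless of the supremum term.

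Since in either case the expression inside the outer supremum is at most $\epsilon$, taking the supremum over $n \in \N$ gives $D(f,g) \leq \epsilon$. There is no real obstacle here; the proof is a two-case computation directly from the definition of $D$ together with monotonicity of $\xi$. The only mildly delicate point is simply noting that the inclusion $\xi(n) \subset \xi(1/\epsilon)$ is valid for every real $1/\epsilon \geq n$, which is ensured because $\xi$ is defined and monotone on all of $[0,\infty)$.
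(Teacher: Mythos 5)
Your proof is correct and follows essentially the same two-case argument as the paper: for $n \leq 1/\epsilon$ the monotonicity of $\xi$ gives $X_n \subset \xi(1/\epsilon)$ and the supremum vanishes, while for $n > 1/\epsilon$ the term is bounded by $1/n < \epsilon$. No differences worth noting.
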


\begin{proof}
Let $f, g \in C(X^2)$ with $f|_{\xi(1/\epsilon)^2} = g|_{\xi(1/\epsilon)^2}$.
In the case that $n \leq 1/\epsilon$,
 $X_n = \xi(n) \subset \xi(1/\epsilon)$,
 so
 \begin{align*}
 \min\bigg\{\sup_{(x,y) \in X_n^2}|f(x,y) - g(x,y)|, 1/n\bigg\} &\leq \sup_{(x,y) \in X_n^2}|f(x,y) - g(x,y)|\\
 &= \sup_{(x,y) \in X_n^2}|f(x,y) - f(x,y)| = 0.
 \end{align*}
In the case that $n > 1/\epsilon$,
 $$\min\bigg\{\sup_{(x,y) \in X_n^2}|f(x,y) - g(x,y)|, 1/n\bigg\} \leq 1/n < \epsilon.$$
Therefore we have
 $$D(f,g) = \sup_{n \in \N} \min\bigg\{\sup_{(x,y) \in X_n^2}|f(x,y) - g(x,y)|, 1/n\bigg\} \leq \epsilon.$$
The proof is complete.
\end{proof}

We shall use the following extension theorem of partial metrics with various domains according to \cite[Theorem~2.1]{BSTZ}.

\begin{thm}\label{ext.}
Let $Y$ be compact.
There exists a map $e : PPM(Y) \to PM(Y)$ such that $PAM(Y) \subset AM(Y)$.
\end{thm}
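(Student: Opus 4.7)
The plan is to build the extension map $e$ via a Kuratowski-style isometric embedding followed by a continuously varying Dugundji-type extension. Fix a compatible metric $d_Y$ on the compact space $Y$. For each $d \in PPM(Y)$ with compact domain $A = \dom d$, the assignment $\kappa_d : A \to \conti(A, \R)$, $\kappa_d(a)(b) = d(a,b)$, is an isometric embedding of $(A, d)$ into the Banach space $\conti(A, \R)$. The first step is to extend $\kappa_d$ to a continuous map $\phi_d : Y \to \conti(A, \R)$ depending continuously on $d$, and pull back the sup-norm to obtain the continuous pseudometric
\[
    e_0(d)(x, y) = \|\phi_d(x) - \phi_d(y)\|_\infty,
\]
which restricts to $d$ on $A^2$.

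To ensure that an admissible input produces an admissible output, I would add a corrector that vanishes on $A^2$ but separates points off $A$. Fix once and for all an injective continuous map $\iota : Y \to \ell^2$, and set
\[
    e(d)(x, y) = e_0(d)(x, y) + \bigl\| d_Y(x, A)\bigl(1, \iota(x)\bigr) - d_Y(y, A)\bigl(1, \iota(y)\bigr) \bigr\|,
\]
with the norm taken in $\R \oplus \ell^2$. The second summand is a continuous pseudometric on $Y$ that vanishes whenever $x, y \in A$, so $e(d)|_{A^2} = d$ is preserved. If $d$ is admissible on a non-degenerate $A$ and $x \neq y$, then $e(d)(x,y) > 0$: when both points lie in $A$ we already have $e_0(d)(x,y) = d(x,y) > 0$; when exactly one lies in $A$ the first coordinate of the corrector vector is strictly positive; and when neither lies in $A$, writing $r_x = d_Y(x, A) > 0$ and $r_y = d_Y(y, A) > 0$, the case $r_x \neq r_y$ forces distinct first coordinates, whereas $r_x = r_y$ forces a nonzero second coordinate $r_x(\iota(x) - \iota(y))$ by injectivity of $\iota$. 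Since $Y$ is compact, the continuous metric $e(d)$ is automatically admissible, giving $e(PAM(Y)) \subset AM(Y)$.

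The main obstacle is the continuous choice of the Dugundji extension $\phi_d$ as the base compactum $A = \dom d$ moves in $\cpt(Y)$; this simultaneous-extension problem is precisely the content of \cite[Theorem~2.1]{BSTZ}, whose construction I would invoke. Continuity of the corrector is easier: the projection $\dom : PPM(Y) \to \cpt(Y)$ induced on the graph representation $PPM(Y) \subset \cpt(Y \times Y \times \R)$ is continuous, and hence so is $d \mapsto d_Y(\cdot, \dom d)$. Once the family $\{\phi_d\}$ is in hand, verifying the pseudometric axioms, the extension property $e(d)|_{(\dom d)^2} = d$, and continuity of $e : PPM(Y) \to PM(Y)$ reduces to routine manipulation of the explicit formulas above.
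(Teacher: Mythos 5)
The first thing to note is that the paper does not prove Theorem~\ref{ext.} at all: it is imported verbatim from \cite[Theorem~2.1]{BSTZ}, so there is no internal argument to compare yours against. Your proposal is best read as a reconstruction of how such a theorem is proved in that reference, and as such it is plausible but not self-contained: the one genuinely hard ingredient --- producing the extensions $\phi_d$ so that they vary continuously as the domain $\dom{d}$ moves in $\cpt(Y)$ --- is exactly the point you defer back to \cite[Theorem~2.1]{BSTZ}. If that theorem is already the metric-extension statement (which is how the paper presents it), your argument is circular; if it is instead a simultaneous Dugundji-type extension operator for partial functions with variable domains, your reduction is legitimate, but you still have an unaddressed formal problem: the target space $C(\dom{d},\R)$ varies with $d$, so ``$\phi_d$ depending continuously on $d$'' is not even well posed until you either embed all of these Banach spaces into a common ambient space or recast the step as extending $d$ itself as a partial function on $Y^2$ with domain $(\dom{d})^2$ and then setting $e_0(d)(x,y)=\sup_{a\in\dom{d}}|\tilde{d}(x,a)-\tilde{d}(y,a)|$. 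As written, the core of the theorem remains unproved in your sketch, exactly as it is in the paper.

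What you add beyond the citation --- the corrector term --- is correct and carefully checked: the map $F(x)=d_Y(x,\dom{d})\,(1,\iota(x))$ pulls the norm of $\R\oplus\ell^2$ back to a continuous pseudometric that vanishes on $(\dom{d})^2$, your case analysis for positivity of $e(d)$ off the diagonal is complete, continuity of $d\mapsto d_Y(\cdot,\dom{d})$ via continuity of $\dom$ into $\cpt(Y)$ is right, and a continuous metric on a compact space is automatically admissible. This is a clean device for upgrading a pseudometric-extension operator to one sending $PAM(Y)$ into $AM(Y)$, a clause that the paper simply absorbs into the citation. So the corrector is a genuine contribution, but the theorem's analytic heart is still being invoked rather than proved.
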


For spaces $Y \subset M$, the restriction $r : PM(M) \to PM(Y)$ is continuous,
 which is defined by $r(d) = d|_{Y^2}$ for all $d \in PM(M)$.
Note that $r(AM(M)) \subset AM(Y)$.
From now on, let $e : e : PPM(\alpha X) \to PM(\alpha X)$ be the extension as in Theorem~\ref{ext.} and let $r : PM(\alpha X) \to PM(X)$ be the restriction as the above.

\begin{prop}\label{Zsigma}
Let $X$ be connected and locally connected, but not compact.
The space $AM(X)$ is contained in some $Z_\sigma$-set in $PM(X)$,
 and hence so is $EM(X)$.
\end{prop}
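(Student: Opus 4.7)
My plan is to find a $Z_\sigma$-set in $PM(X)$ containing $AM(X)$; since $EM(X) \subset AM(X)$, this will suffice. First I observe that any $d \in AM(X)$ has $d(X_1, X \setminus X_2) > 0$ (the compact set $X_1$ is disjoint from the closed set $X \setminus X_2$ in the topology generated by $d$), so
$$AM(X) \;\subset\; \bigcup_{m \in \N} A(1,m).$$
The sets $A(1,m)$ are closed in $PM(X)$ by Section~3, so my task reduces to showing that each $A(1,m)$ is a $Z$-set in $PM(X)$; the union will then be the desired $Z_\sigma$-set.

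To establish the $Z$-set property of $A(1,m)$ for a fixed $m \in \N$, my plan is to construct, for each $\epsilon > 0$, a continuous map $F_\epsilon : PM(X) \to PM(X) \setminus A(1,m)$ with $D(F_\epsilon(d), d) \leq \epsilon$ for every $d$. I would set $N = \max\{2, \lceil 1/\epsilon \rceil\}$, fix a point $p \in X \setminus X_N$ (which exists because $X$ is non-compact), and pick $h \in (0, 1/m)$. Then I apply Lemma~\ref{conti.} with $Y = \alpha X$, $Z = PM(X)$, constant data $f(d) \equiv X_N$, $g(d) \equiv \{p\}$, $h(d) \equiv h$, open set $U = \intr X_{N+1}$, closed set $K = \{p\}$, base points $y_0 = x_0 \in X_1$ and $y_\infty = p$, and continuous assignments $d_0(d) = d|_{U^2}$, $d_\infty(d) \equiv 0$. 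This produces a continuous $\Phi : PM(X) \to PPM(\alpha X)$ with $\Phi(d)|_{X_N^2} = d|_{X_N^2}$ and $\Phi(d)(x_0, p) = h$. Setting $F_\epsilon(d) = r(e(\Phi(d)))$, the extension property of $e$ from Theorem~\ref{ext.} will give $F_\epsilon(d)|_{X_N^2} = d|_{X_N^2}$ and $F_\epsilon(d)(x_0, p) = h$, so $D(F_\epsilon(d), d) \leq \epsilon$ by Lemma~\ref{approx.} (since $X_N = \xi(N) \supset \xi(1/\epsilon)$), while $F_\epsilon(d)(X_1, X \setminus X_2) \leq h < 1/m$ pushes $F_\epsilon(d)$ off $A(1,m)$.

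Finally, since $PM(X)$ is a closed convex subset of the Fréchet space $C(X^2)$ and hence a metrizable AR, the existence of such uniform $\epsilon$-close avoidance maps in the admissible metric $D$ will upgrade by a standard ANR argument to the $\mathcal{U}$-close avoidance required by the definition of a $Z$-set. Thus $A(1,m)$ will be a $Z$-set, $\bigcup_{m \in \N} A(1,m)$ will be a $Z_\sigma$-set containing $EM(X)$, and the proposition will follow. I expect the main obstacle to be wiring Lemma~\ref{conti.} and Theorem~\ref{ext.} together into one continuous assignment $d \mapsto F_\epsilon(d)$ that simultaneously preserves $d$ on the large compact $X_N$ and prescribes a short $x_0$-to-$p$ distance; once the data is packaged as above, the remaining verifications are routine.
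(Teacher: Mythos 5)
Your overall strategy is the paper's: write $AM(X)\subset\bigcup_{m\in\N}A(1,m)$, and push $PM(X)$ off each closed set $A(1,m)$ by gluing a far-away point onto a large compact piece of $X$ at distance less than $1/m$ from $x_0$, then extending via Theorem~\ref{ext.} and restricting back to $X$. The verification that the resulting metric leaves $A(1,m)$ while agreeing with the original on a large compact set is fine (your version, keeping the auxiliary point $p$ inside $X$, even avoids the paper's small triangle-inequality detour through $x_\infty$). The gap is in the final step. The definition of a $Z$-set used here quantifies over all open covers $\mathcal{U}$ of $PM(X)$, and $PM(X)$ is not compact (it is homeomorphic to $\s$ by Theorem~\ref{pm}), so an open cover need not have a positive Lebesgue number with respect to $D$. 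Hence producing, for each \emph{constant} $\epsilon>0$, a map $F_\epsilon$ with $D(F_\epsilon(d),d)\le\epsilon$ does not give $\mathcal{U}$-closeness for arbitrary $\mathcal{U}$. The ``standard ANR argument'' you invoke upgrades approximation controlled by an arbitrary \emph{continuous} function $\epsilon:PM(X)\to(0,1)$ (i.e., $D(f(d),d)\le\epsilon(d)$ pointwise) to $\mathcal{U}$-closeness; constant-$\epsilon$ approximation is strictly weaker, and the two coincide only on compact spaces.

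To repair this you must let $\epsilon$ be a continuous map on $PM(X)$ and make the whole construction vary with $d$: the compact piece has to be $\xi(1/\epsilon(d))$ (your $N=\max\{2,\lceil 1/\epsilon\rceil\}$ is integer-valued and cannot depend continuously on $d$), which is precisely why Lemma~\ref{conti.} is formulated with variable domains $f(z)$ and $g(z)$; and the auxiliary far point must lie outside $\xi(1/\epsilon(d))$ for every value of $\epsilon(d)$ simultaneously and be chosen continuously in $d$. Your fixed $p\in X\setminus X_N$ does not achieve this, whereas the paper's choice of $x_\infty$ does, since $x_\infty\notin\xi(t)$ for all $t$. Once these changes are made, your argument becomes essentially the proof given in the paper.
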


\begin{proof}
Notice that $AM(X) \subset \bigcup_{m \in \N} A(1,m)$.
To show that $A(1,m)$ is a $Z$-set in $PM(X)$ for every $m \in \N$, fix any map $\epsilon : PM(X) \to (0,1)$.
We will construct a map $\Psi : PM(X) \to PM(X)$ so that $\Psi(PM(X)) \cap A(1,m) = \emptyset$ and $D(d,\Psi(d)) \leq \epsilon(d)$ for each $d \in PM(X)$.
Define a function $\Phi : PM(X) \to PPM(\alpha X)$ satisfying the following conditions:
\begin{enumerate}
 \item $\dom{\Phi(d)} = \xi(1/\epsilon(d)) \cup \{x_\infty\}$;
 \item $\Phi(d)(x,y) = \left\{
 \begin{array}{ll}
 d(x,y) &\text{if } x, y \in \xi(1/\epsilon(d)),\\
 d(x,x_0) + 1/(m + 1) &\text{if } x \in \xi(1/\epsilon(d)) \text{ and } y = x_\infty,\\
 d(y,x_0) + 1/(m + 1) &\text{if } x = x_\infty \text{ and } y \in \xi(1/\epsilon(d)),\\
 0 &\text{if } x = y = x_\infty.
 \end{array}
 \right.$
\end{enumerate}
It follows from Lemma~\ref{conti.} that $\Phi$ is continuous.
Let $\Psi = re\Phi$,
 that is a desired map.
To verify it, take any $d \in PM(X)$.
Firstly, we shall show that $\Psi(d) \notin A(1,m)$.
Since $e\Phi(d)$ is a continuous pseudometric on $\alpha X$ and $x_\infty$ is not an isolated point,
 we can obtain $x \in X \setminus X_2$ so that $e\Phi(d)(x_\infty,x) < 1/m - 1/(m + 1)$.
Then
\begin{align*}
 \inf\{\Psi(d)(y,z) \mid y \in X_1, z \in X \setminus X_2\} &\leq \Psi(d)(x_0,x) = re\Phi(d)(x_0,x) = e\Phi(d)(x_0,x)\\
 &\leq e\Phi(d)(x_0,x_\infty) + e\Phi(d)(x_\infty,x)\\
 &< 1/(m + 1) + 1/m - 1/(m + 1) = 1/m,
\end{align*}
 which means that $\Psi(d) \notin A(1,m)$.
Secondly, prove that $D(d,\Psi(d)) \leq \epsilon(d)$.
Remark that
 $$\Psi(d)(x,y) = re\Phi(d)(x,y) = e\Phi(d)(x,y) = \Phi(d)(x,y) = d(x,y)$$
 for any $x, y \in \xi(1/\epsilon(d))$.
It follows from Lemma~\ref{approx.} that $D(d,\Psi(d)) \leq \epsilon(d)$.
We conclude that $A(1,m)$ is a $Z$-set in $PM(X)$,
 so $AM(X)$ is contained in the $Z_\sigma$-set $\bigcup_{m \in \N} A(1,m)$.
\end{proof}

\section{The strong $\mathfrak{M}_2$-universality of $EM(X)$}

In this section, we will verify the strong $\mathfrak{M}_2$-universality of $EM(X)$.
For any $c > 0$, let
 $$Z(c) = \bigg\{d \in PM(X) \bigg| \liminf_{n \to \infty} d(x_0,X \setminus X_n) \leq c\bigg\}.$$
We have the following lemma.

\begin{lem}\label{raise}
Let $X$ be connected and locally connected, but not compact.
Suppose that $A$ is a closed set and $B$ is a $Z$-set in $PM(X)$, respectively.
If $A \subset Z(c) \cup B$ for some $c > 0$,
 then $A$ is a $Z$-set.
\end{lem}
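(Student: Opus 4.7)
The plan is to reduce the $Z$-set verification to constructing, for an arbitrary continuous $\epsilon : PM(X) \to (0,1)$, a map $F : PM(X) \to PM(X)$ with $D(F(d),d) < \epsilon(d)$ whose image is disjoint from $Z(c) \cup B$; since $A \subset Z(c) \cup B$ is closed, this will suffice. I would build $F$ as a two-stage construction: first push off $B$ using its $Z$-set property, then perform a modification near $x_\infty$ in the spirit of the proof of Proposition~\ref{Zsigma}, but with the additive constant $1/(m+1)$ replaced by $c+1$, so as to drive the liminf of $d(x_0,X \setminus X_n)$ strictly above $c$.

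First I would invoke the $Z$-set property of $B$ to pick a map $\gamma : PM(X) \to PM(X)$ with $D(\gamma(d),d) < \epsilon(d)/2$ and $\gamma(PM(X)) \cap B = \emptyset$. Because $B$ is closed and $\gamma(d) \notin B$, the function $d \mapsto D(\gamma(d),B)$ is continuous and strictly positive, so
$$\delta(d) = \min\{D(\gamma(d),B)/2,\,\epsilon(d)/2\}$$
defines a continuous function $PM(X) \to (0,1)$, and in particular $x_0 \in X_1 = \xi(1) \subset \xi(1/\delta(d))$ for every $d$.

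Next, mimicking the construction of $\Phi$ in the proof of Proposition~\ref{Zsigma} with $\gamma(d)$ in place of $d$ and $c+1$ in place of $1/(m+1)$, I would define $\Phi : PM(X) \to PPM(\alpha X)$ by letting $\Phi(d)$ be the partial pseudometric on $\xi(1/\delta(d)) \cup \{x_\infty\}$ that coincides with $\gamma(d)$ on $\xi(1/\delta(d))^2$, assigns $\gamma(d)(x,x_0) + c + 1$ to $(x,x_\infty)$ for $x \in \xi(1/\delta(d))$, and vanishes at $(x_\infty,x_\infty)$. Continuity of $\Phi$ follows from Lemma~\ref{conti.} applied with $Y = \alpha X$, $U = X$, $K = \{x_\infty\}$, $y_0 = x_0$, $y_\infty = x_\infty$, $h \equiv c+1$ and $d_\infty \equiv 0$. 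I then set $F = r \circ e \circ \Phi$, using the extension map $e$ of Theorem~\ref{ext.} and the restriction $r : PM(\alpha X) \to PM(X)$.

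The verification splits into three pieces. Since $F(d)$ agrees with $\gamma(d)$ on $\xi(1/\delta(d))^2$, Lemma~\ref{approx.} yields $D(F(d),\gamma(d)) \leq \delta(d)$, whence $D(F(d),d) < \epsilon(d)$, and $\delta(d) \leq D(\gamma(d),B)/2$ forces $D(F(d),B) \geq D(\gamma(d),B)/2 > 0$, so $F(d) \notin B$. For $F(d) \notin Z(c)$, the continuity of $e\Phi(d) \in PM(\alpha X)$ at $x_\infty$ provides some $N$ with $e\Phi(d)(x_\infty,y) < 1/2$ on $\alpha X \setminus X_N$; using $e\Phi(d)(x_0,x_\infty) = c+1$ and the triangle inequality then gives $F(d)(x_0,y) > c + 1/2$ for every $n \geq N$ and $y \in X \setminus X_n$, so $\liminf_n F(d)(x_0,X \setminus X_n) \geq c + 1/2 > c$. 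The main obstacle is this simultaneous control: the modification near $x_\infty$ needed to escape $Z(c)$ necessarily perturbs the output of $\gamma$, so its size must be clamped both by $\epsilon(d)$ (to preserve closeness to the identity) and by $D(\gamma(d),B)$ (to preserve the avoidance of $B$); packaging both constraints into the single continuous function $\delta$ above resolves the tension.
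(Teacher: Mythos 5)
Your proposal is correct and follows essentially the same route as the paper: push off $B$ first via its $Z$-set property, clamp the cutoff parameter by $\min\{\epsilon(d),D(\Phi_1(d),B)\}/2$, and then attach $x_\infty$ at distance $c+1$ from $x_0$ over $\xi(1/\delta(d))$ using Lemma~\ref{conti.}, Theorem~\ref{ext.} and Lemma~\ref{approx.}, exactly as in the paper's $\Psi = re\Phi_2\Phi_1$. The three verifications (closeness, avoidance of $B$, and $\liminf_n \Psi(d)(x_0,X\setminus X_n) \geq c+1/2 > c$ via continuity of the extended pseudometric at $x_\infty$) coincide with the paper's.
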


\begin{proof}
For each map $\epsilon : PM(X) \to (0,1)$, we shall construct a map $\Psi : PM(X) \to PM(X)$ such that $\Psi(PM(X)) \cap (Z(c) \cup B) = \emptyset$ and $D(d,\Psi(d)) \leq \epsilon(d)$ for any $d \in PM(X)$.
Since $B$ is a $Z$-set,
 there exists a map $\Phi_1 : PM(X) \to PM(X)$ such that $\Phi_1(PM(X)) \cap B = \emptyset$ and $D(d,\Phi_1(d)) \leq \epsilon(d)/2$.
Letting
 $$\delta(d) = \min\{\epsilon(d),D(\Phi_1(d),B)\}/2,$$
 we define a function $\Phi_2 : PM(X) \to PPM(\alpha X)$ as follows:
\begin{enumerate}
 \item $\dom{\Phi_2(d)} = \xi(1/\delta(d)) \cup \{x_\infty\}$;
 \item $\Phi_2(d)(x,y) = \left\{
 \begin{array}{ll}
 d(x,y) &\text{if } x, y \in \xi(1/\delta(d)),\\
 d(x,x_0) + c + 1 &\text{if } x \in \xi(1/\delta(d)) \text{ and } y = x_\infty,\\
 d(y,x_0) + c + 1 &\text{if } x = x_\infty \text{ and } y \in \xi(1/\delta(d)),\\
 0 &\text{if } x = y = x_\infty.
 \end{array}
 \right.$
\end{enumerate}
Due to Lemma~\ref{conti.}, $\Phi_2$ is continuous.
Then we can obtain the desired map $\Psi = re\Phi_2\Phi_1$.
By the same argument as Proposition~\ref{Zsigma},
 $$D(\Phi_1(d),\Psi(d)) \leq \delta(d) = \min\{\epsilon(d),D(\Phi_1(d),B)\}/2$$
 for each $d \in PM(X)$.
Therefore
 $$D(d,\Psi(d)) \leq D(d,\Phi_1(d)) + D(\Phi_1(d),\Psi(d)) \leq \epsilon(d)/2 + \delta(d) \leq \epsilon(d),$$
 and $\Psi(PM(X)) \cap B = \emptyset$.
It remains to prove that $\Psi(PM(X)) \cap Z(c) = \emptyset$.
For each $d \in PM(X)$, since $e\Phi_2\Phi_1(d) \in PM(\alpha X)$,
 there exists $m \in \N$ such that if $x \in X \setminus X_m$,
 then $e\Phi_2\Phi_1(d)(x_\infty,x) < 1/2$.
It follows that
\begin{align*}
 \Psi(d)(x_0,x) &= re\Phi_2\Phi_1(d)(x_0,x) = e\Phi_2\Phi_1(d)(x_0,x)\\
 &\geq e\Phi_2\Phi_1(d)(x_0,x_\infty) - e\Phi_2\Phi_1(d)(x_\infty,x) > c + 1 - 1/2 = c + 1/2
\end{align*}
 for any $k \geq m$ and any $x \in X \setminus X_k$.
Hence $\liminf_{n \to \infty} \Psi(d)(x_0,X \setminus X_n) > c$,
 which means that $\Psi(PM(X)) \cap Z(c) = \emptyset$.
As a result, $A$ is a $Z$-set.
\end{proof}

We show the following:

\begin{lem}\label{chain}
Let $X$ be a locally connected but not compact space.
Then $X$ is chain-connected to infinity if and only if there is an arc $\sigma : [0,1] \to \alpha X$ such that $\sigma(0) = x_\infty$ and $\sigma((0,1]) \subset X$.
\end{lem}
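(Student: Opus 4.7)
The plan is to prove the two directions separately. For $(\Leftarrow)$, given the arc $\sigma$, set $C_i = \sigma([1/(i+1),1/i])$: each $C_i$ is a connected subset of $X$ as the continuous image of a subinterval of $(0,1]$, consecutive members share $\sigma(1/(i+1))$, and given a compact $K \subset X$ the continuity of $\sigma$ at $0$ yields $\delta > 0$ with $\sigma([0,\delta)) \subset (X \setminus K) \cup \{x_\infty\}$, so $C_i \subset X \setminus K$ whenever $1/i < \delta$.

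For $(\Rightarrow)$ the idea is to build a path $\sigma \colon [0,1] \to \alpha X$ from $x_\infty$ into $X$ by concatenating short paths in a nested sequence of components of complements of compacta, and then extract an arc. Fix a compact exhaustion $X = \bigcup_n K_n$ with $K_n \subset \intr{K_{n+1}}$. By the chain hypothesis, for each $n$ choose $i_n \in \N$ with $C_i \subset X \setminus K_n$ for all $i \geq i_n$; since $\bigcup_{i \geq i_n} C_i$ is connected (consecutive terms overlap), it lies in a unique connected component $W_n$ of $X \setminus K_n$, and the family $\{W_n\}$ is nested because the tail of the chain lies in both $W_n$ and $W_{n+1}$. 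Each $W_n$ is open in $X$ by local connectedness, hence connected, locally connected, locally compact and separable metrizable, and therefore path-connected by \cite[Theorem~5.14.5]{Sakaik11}. Pick $p_n \in C_{i_n} \cap C_{i_n+1} \subset W_n$; since $p_{n+1} \in W_{n+1} \subset W_n$ as well, choose a path $\gamma_n \colon [1/2^{n+1},1/2^n] \to W_n$ joining $p_{n+1}$ to $p_n$, and concatenate these (with $\sigma(0) = x_\infty$) into a map $\sigma \colon [0,1/2] \to \alpha X$; reparameterize to $[0,1]$.

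Continuity of $\sigma$ on $(0,1]$ follows from the pasting lemma applied to the closed intervals $[1/2^{n+1},1/2^n]$; continuity at $0$ is the crucial verification. Given an open neighborhood $U \supset (X \setminus K) \cup \{x_\infty\}$ of $x_\infty$, choose $N$ with $K \subset K_N$; then for every $n \geq N$ we have $\gamma_n \subset W_n \subset X \setminus K_N \subset U$, so $\sigma(t) \in U$ whenever $t \leq 1/2^N$. Since $\sigma(0) = x_\infty \neq \sigma(1)$ and $\alpha X$ is a compact Hausdorff space, the classical Hausdorff arc theorem extracts an arc $\tilde{\sigma} \colon [0,1] \to \sigma([0,1]) \subset \alpha X$ with the same endpoints, and injectivity together with $\tilde{\sigma}(0) = x_\infty$ forces $\tilde{\sigma}((0,1]) \subset X$. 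The principal obstacle is to keep the concatenated path from wandering: the paths $\gamma_n$ could in principle stray far from $x_\infty$, but confining each $\gamma_n$ to the shrinking component $W_n$ forces them to cluster at $x_\infty$, and this confinement is available precisely because the chain condition pins a tail of $\{C_i\}$ into a single component of $X \setminus K_n$.
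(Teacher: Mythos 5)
Your proposal is correct and follows essentially the same route as the paper: tails (resp.\ subarcs) of $\sigma$ give the simple chain in one direction, and in the other direction a nested sequence of open connected neighborhoods of infinity (your $W_n$, the paper's modified $D_n$) is used together with path-connectedness of connected open subsets of a locally connected, locally compact space to concatenate a path into $x_\infty$, which is then replaced by an arc. The only cosmetic difference is that you take $W_n$ directly as the component of $X \setminus K_n$ containing a tail of the chain, whereas the paper arrives at the same open connected sets by an inductive replacement of the unions $\bigcup_{i \ge i(n)} C_i$.
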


\begin{proof}
Firstly, we shall prove the ``if'' part.
Taking an arc $\sigma : [0,1] \to \alpha X$ so that $\sigma(0) = x_\infty$ and $\sigma((0,1]) \subset X$, we can find $t_n \in (0,1]$ for each $n \in \N$ such that $\sigma((0,t_n]) \subset X \setminus X_n$ and $t_{n + 1} < t_n$.
Let $C_i = \sigma((0,t_i])$,
 so $\{C_i\}$ is a simple chain consisting of connected subset in $X$.
Moreover, for every compact set $K \subset X$, there is $i(K) \in \N$ such that $K \subset X_{i(K)}$,
 and hence if $i \geq i(K)$,
 then $C_i \subset X \setminus X_{i(K)} \subset X \setminus K$.
It follows that $X$ is chain-connected to infinity.

Next, we show the ``only if'' part.
Since $X$ is chain-connected to infinity,
 we can obtain a simple chain $\{C_i\}$ consisting of connected sets in $X$ so that for every $n \in \N$, there is $i(n) \in \N$ such that for any $i \geq i(n)$, $C_i \subset X \setminus X_n$.
Put $D_n = \bigcup_{i \geq i(n)} C_i$ and observe that for any $n \in \N$, $D_n$ is connected and $D_{n + 1} \subset D_n \subset X \setminus X_n$.
Then we may assume that each $D_n$ is an open subset of $X$.
Indeed, replace $D_1$ with the connected component in $X \setminus X_1$ containing $D_1$.
Then the connected component $D_1$ is open because $X$ is locally connected.
Suppose that $D_n$ is open for some $n \in \N$.
Replacing $D_{n + 1}$ with the connected component in $D_n \cap X \setminus X_n$ containing $D_{n + 1}$, we have that $D_{n + 1}$ is open due to the local connectedness of $X$.
By induction, every $D_n$ is open.
Fix any point $x_n \in D_n$ for each $n \in \N$,
 so we can choose a path $\sigma|_{[1/(n + 1),1/n]} : [1/(n + 1),1/n] \to D_n$ so that $\sigma(1/n) = x_n$ and $\sigma(1/(n + 1)) = x_{n + 1}$ since $D_n$ is path-connected.
Connect them,
 so we get the map $\sigma|_{(0,1]} : (0,1] \to X$,
 that can be extended to the path $\sigma : [0,1] \to \alpha X$ by $\sigma(0) = x_\infty$.
Replacing $\sigma$ with an arc, we finish the proof.
\end{proof}

The next lemma will be used for proving Proposition~\ref{str.univ.}.

\begin{lem}\label{arc}
Suppose that $X$ is connected and locally connected, but not compact.
If $X$ is chain-connected to infinity,
 then there exists an arc $\gamma : [0,1] \to \alpha X$ such that the following conditions hold:
\begin{enumerate}
 \item $\gamma(0) = x_\infty$ and $\gamma((0,1]) \subset X$,
 \item $\xi(t) \cap \gamma([0,1/t]) = \emptyset$ for all $t \in [1,\infty)$.
\end{enumerate}
\end{lem}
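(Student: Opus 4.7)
The plan is to take an arc $\sigma \colon [0,1] \to \alpha X$ supplied by Lemma~\ref{chain} and reparametrize its domain so that the resulting arc $\gamma$ escapes from each $\xi(n)$ fast enough to force condition~(2). The key observation is that for every $n$ the closed set $\sigma^{-1}(\xi(n)) \subset [0,1]$ avoids $0$, hence $\sigma$ stays outside $\xi(n)$ on some initial interval $[0,\epsilon_n)$ with $\epsilon_n > 0$. By slowing $\sigma$ down so that $\gamma(1/n)$ corresponds to $\sigma$-time at most $\epsilon_{n+1}/2$, the image $\gamma([0,1/t])$ will automatically sit inside an initial piece of $\sigma$ that misses $\xi(t)$.

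Concretely, for each $n \in \N$ I would set
\[
 \epsilon_n = \min\bigl\{t \in [0,1] \mid \sigma(t) \in \xi(n)\bigr\},
\]
with the convention $\min \emptyset = 1$; the minimum is attained because $\sigma^{-1}(\xi(n))$ is closed, $\epsilon_n > 0$ follows from continuity of $\sigma$ at $0$ together with $\sigma(0) = x_\infty \notin \xi(n)$, monotonicity of $\xi$ yields $\epsilon_{n+1} \leq \epsilon_n$, and injectivity of $\sigma$ combined with $X = \bigcup_n \xi(n)$ forces $\epsilon_n \to 0$. Replace $\epsilon_n$ by $\epsilon_n' = \min\{\epsilon_n, 1/n\}$ so the sequence becomes strictly positive, strictly decreasing, and still tends to $0$. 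Define $\phi \colon [0,1] \to [0,\epsilon_2'/2]$ by $\phi(0)=0$, $\phi(1/n)=\epsilon_{n+1}'/2$ for $n \in \N$, and linear interpolation on each $[1/(n+1),1/n]$. Then $\phi$ is an increasing homeomorphism onto $[0,\epsilon_2'/2]$, so $\gamma = \sigma \circ \phi$ is an arc (a homeomorphism composed with a restriction of the arc $\sigma$) with $\gamma(0)=x_\infty$ and $\gamma((0,1]) \subset \sigma((0,\epsilon_2'/2]) \subset X$, which settles condition~(1).

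For condition~(2), fix $t \in [1,\infty)$ and put $n = \lfloor t \rfloor$, so $n \leq t \leq n+1$ and $\xi(t) \subset \xi(n+1)$. Since $\phi$ is increasing,
\[
 \phi(1/t) \leq \phi(1/n) = \epsilon_{n+1}'/2 < \epsilon_{n+1},
\]
and by definition $\sigma([0,\epsilon_{n+1})) \cap \xi(n+1) = \emptyset$. Therefore
\[
 \gamma([0,1/t]) = \sigma([0,\phi(1/t)]) \subset \sigma([0,\epsilon_{n+1})),
\]
which is disjoint from $\xi(n+1)$ and hence from $\xi(t)$. The only nontrivial design choice is the index shift $\phi(1/n) = \epsilon_{n+1}'/2$ rather than $\epsilon_n'/2$: the piece of $\gamma$ on $[1/(n+1),1/n]$ must avoid $\xi(t)$ for every $t \in [1,n+1]$, not just up to $n$, and the shift by one is exactly what compensates for that. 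Using the ready-made arc from Lemma~\ref{chain} also sidesteps the otherwise annoying problem of having to build pairwise disjoint arcs inside the nested open sets supplied by chain-connectedness.
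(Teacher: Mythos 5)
Your proof is correct and takes essentially the same route as the paper's: both precompose the arc $\sigma$ from Lemma~\ref{chain} with a piecewise-linear increasing reparametrization of $[0,1]$ that sends $1/n$ to a time strictly before $\sigma$ first meets $\xi(n+1)$, so that $\gamma([0,1/t])$ lands in the initial segment of $\sigma$ avoiding $\xi(\lfloor t\rfloor+1)\supset\xi(t)$. The only under-justified step is the strict decrease of $\epsilon_n'=\min\{\epsilon_n,1/n\}$ (the minimum of a merely non-increasing sequence with $1/n$ need not be strictly decreasing); it does hold here because $\xi(n)\subset\intr{\xi(n+1)}$ forces $\epsilon_{n+1}<\epsilon_n$ whenever $\sigma$ meets $\xi(n)$, but that deserves a line, or simply replace $\epsilon_n'$ by $\epsilon_n/2^n$.
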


\begin{proof}
By virtue of Lemma~\ref{chain}, we can choose an arc $\sigma : [0,1] \to \alpha X$ so that $\sigma(0) = x_\infty$ and $\sigma([0,1]) \subset \alpha X \setminus X_2$.
For each $n \in \N$, there is $t_n \in (0,1]$ such that $\sigma([0,t_n]) \subset \alpha X \setminus X_{n + 1}$ and $t_n > t_{n + 1}$.
Indeed, let $t_1 = 1$ and assume that $t_n \in (0,1]$ can be obtained for some $n \in \N$.
Due to the continuity at $0$ of $\sigma$, we can find $t_{n + 1} \in (0,t_n)$ such that $\sigma([0,t_{n + 1}]) \subset \alpha X \setminus X_{n + 2}$.
By induction, we have the decreasing sequence $\{t_n\} \subset (0,1]$.
Let $\tau : [0,1] \to [0,1]$ be the map such that for each $n \in \N$ and each $t \in [1/(n + 1),1/n]$,
 $$\tau(t) = (t - 1/(n + 1))n(n + 1)(t_n - t_{n + 1}) + t_{n + 1}.$$
Then the desired arc $\gamma$ can be defined by $\gamma(t) = \sigma\tau(t)$.
\end{proof}

Using the arc $\gamma : [0,1] \to \alpha X$ as in the above lemma, we define an arc $\eta : [0,1] \to \cpt(\alpha X)$ by $\eta(t) = \gamma([0,t])$.
Let $x_m = \gamma(2^{-m}) \in X$ for each $m \in \N$.
A closed set $A \subset Y$ is called to be a \textit{strong $Z$-set} if the identity map of $Y$ can be approximated by a map $f : Y \to Y$ such that $\cl{f(Y)} \cap A = \emptyset$.
Combining Theorem~\ref{pm}, Proposition~\ref{homot.dense} with \cite[Theorem~1.3.2 and Proposition~1.4.3]{BRZ}, we have the following:

\begin{prop}\label{str.Z}
Suppose that $X$ is a locally connected but not compact space.
If $X$ is not discrete,
 then every $Z$-set in $EM(X)$ is a strong $Z$-set.
\end{prop}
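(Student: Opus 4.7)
The plan is to reduce the claim to a standard fact about $\s$ by combining the three cited results. By Theorem~\ref{pm}, since $X$ is locally connected and not discrete, $PM(X)$ is homeomorphic to $\s$. By Proposition~\ref{homot.dense}, $EM(X)$ is homotopy dense in $PM(X)$. The two inputs from \cite{BRZ} should be: in $\s$-manifolds (in particular in $\s$ itself) every $Z$-set is automatically a strong $Z$-set (Theorem~1.3.2), and the $Z$-set / strong $Z$-set properties behave compatibly between a space and any homotopy dense subspace of it (Proposition~1.4.3).

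Starting from a $Z$-set $A \subset EM(X)$, the strategy is a three-step transfer. First, apply \cite[Proposition~1.4.3]{BRZ} to promote $A$ to a $Z$-set of the ambient space $PM(X)$. Second, using $PM(X) \cong \s$, invoke \cite[Theorem~1.3.2]{BRZ} to upgrade $A$ to a strong $Z$-set of $PM(X)$. Third, apply \cite[Proposition~1.4.3]{BRZ} once more, in the reverse direction, to conclude that $A$ is a strong $Z$-set of $EM(X)$.

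The delicate point is the final transfer. Given a strong $Z$-approximation $\varphi \colon PM(X) \to PM(X)$ of $\id_{PM(X)}$ with $\cl{\varphi(PM(X))} \cap A = \emptyset$, one must push $\varphi$ into $EM(X)$ without destroying the closure condition. Concretely, one composes with the homotopy $h \colon PM(X) \times [0,1] \to PM(X)$ witnessing homotopy density, setting $\varphi'(d) = h(\varphi(d), t(d))$ for a continuous parameter $t \colon EM(X) \to (0,1]$ chosen small enough; since $h(\,\cdot\,,t)$ lands in $EM(X)$ whenever $t > 0$ and $h$ is uniformly close to the identity as $t \to 0$, one obtains an approximation of $\id_{EM(X)}$ into $EM(X)$ whose image has closure (taken in $EM(X)$) disjoint from $A$. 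This packaging is precisely the content of \cite[Proposition~1.4.3]{BRZ}, so the proof reduces to a short chain of citations and the main obstacle is merely confirming that the quoted form of [BRZ, Prop.~1.4.3] covers the strong $Z$-set variant and not only the weak one.
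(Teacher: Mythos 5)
Your proposal is correct and follows essentially the same route as the paper, which likewise derives the statement purely by combining Theorem~\ref{pm}, Proposition~\ref{homot.dense}, and \cite[Theorem~1.3.2 and Proposition~1.4.3]{BRZ} without further elaboration. Your added discussion of transferring the approximation through the homotopy-density witness is a reasonable unpacking of \cite[Proposition~1.4.3]{BRZ} rather than a departure from the paper's argument.
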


The space
 $$\cs_1 = \bigg\{(x(n))_{n \in \N} \in \Q \ \bigg| \ \lim_{n \to \infty} x(n) = 1\bigg\}$$
 is an $\mathfrak{M}_2$-absorbing set in $\Q$,
 see to \cite{Mil3},
 and hence it admits closed embeddings from spaces belonging to $\mathfrak{M}_2$.
Now we show the following:

\begin{prop}\label{str.univ.}
Let $X$ be connected and locally connected, but not compact.
Suppose that $X$ is chain-connected to infinity.
Then the space $EM(X)$ is strongly $\mathfrak{M}_2$-universal.
\end{prop}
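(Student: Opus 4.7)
The plan is to parametrise the construction used in Lemma~\ref{raise} over $A$, encoding a closed embedding $\iota : A \to \cs_1$ into the tail behaviour near $x_\infty$ of the resulting family of metrics. Since $A \in \mathfrak{M}_2$ and $\cs_1$ is $\mathfrak{M}_2$-absorbing in $\Q$, fix such a closed embedding $\iota = (\iota_n)_{n \in \N}$ with $\iota_n(a) \in (0,1]$ and $\iota_n(a) \to 1$; translate $\mathcal{U}$ into a continuous map $\epsilon : A \to (0,1)$ so that $D(\rho, f(a)) \leq \epsilon(a)$ forces $\mathcal{U}$-closeness to $f(a)$; and fix a map $\lambda : A \to [0,1]$ with $\lambda^{-1}(0) = B$.

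For each $a \in A$, choose $N(a) \geq 1/\epsilon(a)$, and using the arc $\gamma$ from Lemma~\ref{arc}, the induced path $\eta(t) = \gamma([0,t])$ in $\cpt(\alpha X)$ (available by Proposition~\ref{hyp.}), and the points $x_n = \gamma(2^{-n})$, apply Lemma~\ref{conti.} with the clusters $\xi(N(a))$ and $\eta(2^{-N(a)})$, inputs $d_0(a) = f(a)|_{\xi(N(a))^2}$ and a continuous family $d_\infty(a) \in AM(\eta(2^{-N(a)}))$ whose distances from $x_n$ to $x_\infty$ reproduce $(\lambda(a)(1 - \iota_n(a))\cdot 2^{-n})_{n \geq N(a)}$, together with a fixed positive jump $h$. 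Composing with the extension $e$ of Theorem~\ref{ext.} and the restriction $r$ yields a continuous $re\Phi : A \to EM(X)$, and we set
$$g(a) := (1 - \lambda(a))\,f(a) + \lambda(a)\, re\Phi(a),$$
which lies in $EM(X)$ by Proposition~\ref{cvx.}. By Lemma~\ref{approx.}, since both summands agree with $f(a)$ on $\xi(N(a))^2$, one obtains $D(g(a), f(a)) \leq \epsilon(a)$, so $g$ is $\mathcal{U}$-close to $f$; when $\lambda(a) = 0$ the blend collapses to $g(a) = f(a)$, giving $g|_B = f|_B$; and injectivity of $g$ follows from that of $\iota$, since the sequence $\lambda(a)(1 - \iota_n(a))2^{-n}$ is recoverable from the tail of $g(a)$ near $x_\infty$.

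It remains to show $g(A)$ is a $Z$-set in $EM(X)$. Since $f(B)$ is a $Z$-set and the fixed jump $h$ uniformly caps $\overline{re\Phi(a)}(x_0, x_\infty)$, a neighbourhood argument controlling the $(1 - \lambda(a))\,f(a)$ summand around $B$ places $g(A)$ inside $Z(c) \cup f(B)$ for a suitable $c > 0$; Lemma~\ref{raise} then yields that $g(A)$ is a $Z$-set in $PM(X)$, and Proposition~\ref{str.Z} upgrades this to a $Z$-set in $EM(X)$. The main obstacle is the construction of the continuous family $d_\infty(a)$ on the variable arc segment $\eta(2^{-N(a)})$ that simultaneously encodes $\iota(a)$ faithfully enough to force injectivity, degenerates as $\lambda(a) \to 0$ so that $g$ remains continuous with $g|_B = f|_B$, and preserves the uniform cap on $\overline{g(a)}(x_0, x_\infty)$ needed for Lemma~\ref{raise}. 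Reconciling these three demands is where the geometric flexibility of the arc $\gamma$ (Lemma~\ref{arc}), the hyperspace continuity from Proposition~\ref{hyp.}, the glueing provided by Lemma~\ref{conti.}, and the uniform extension operator of Theorem~\ref{ext.} must all be orchestrated in concert.
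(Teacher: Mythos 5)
Your overall strategy---encode a closed embedding of $A$ into $\cs_1$ in the tail behaviour of the metrics along the arc $\gamma$ to $x_\infty$, glue with Lemma~\ref{conti.}, extend with Theorem~\ref{ext.}, and finish with Lemma~\ref{raise}---is the same as the paper's, but there are genuine gaps. The most serious one is the convex blend $g(a)=(1-\lambda(a))f(a)+\lambda(a)\,re\Phi(a)$. Its values at the points $x_n=\gamma(2^{-n})$ are $(1-\lambda(a))f(a)(x_n,x_0)$ plus the encoded term, and the numbers $f(a)(x_n,x_0)$ are completely uncontrolled and vary with $a$; two distinct points $a_1,a_2$ can produce the same mixture, so injectivity does \emph{not} ``follow from that of $\iota$'' as you claim. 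The paper avoids this by replacing $f(a)$ wholesale outside $\xi(1/\epsilon f(a))$: by Lemma~\ref{arc}(2) the arc segment carrying the encoding is disjoint from $\xi(1/\epsilon f(a))$, so $h(a)(x_{2k+i},x_0)$ equals the encoded value exactly and $p(a)$ is read off coordinate by coordinate. Relatedly, without the preliminary reduction (via \cite[Proposition~2.8.12 and Lemma~2.8.10]{Sakaik12}) arranging $f(A\setminus B)\cap f(B)=\emptyset$ and the property that $f(a_n)\to d\in f(B)$ forces $a_n\to f^{-1}(d)$, you cannot rule out that points of $A\setminus B$ collide with $g(B)=f(B)$, nor control the behaviour of $g$ near $B$.

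The second gap is that you never show $g$ is a closed map. A $Z$-embedding requires a closed image, and Lemma~\ref{raise} is stated for a set that is \emph{already closed} in $PM(X)$; containment in $Z(c)\cup f(B)$ alone gives nothing. Proving closedness is the longest part of the paper's argument: given $g(a_n)\to d$, one must either land in $f(B)$ (handled by the vanishing of $\epsilon$ on $f(B)$ and property (i)) or recover $\lim_n p(a_n)$ coordinatewise from $d$ and verify that the limit sequence actually lies in $\cs_1$, which requires the delicate estimate with $\lambda\leq 2-2^kc$. Finally, you explicitly defer the construction of the continuous family $d_\infty(a)$ on the variable domains $\eta(2^{-N(a)})$ and its degeneration as $\lambda(a)\to 0$; this is precisely the content of the maps $q_i^k$, $g_k$, and the consistency check $g_k=g_{k+1}$ on $A_k\cap A_{k+1}$ in the paper, so the ``main obstacle'' you name is not incidental---it is the proof.
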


\begin{proof}
Suppose that $A \in \mathfrak{M}_2$, $B$ is a closed set in $A$, and $f : A \to EM(X)$ is a map such that $f|_B$ is a $Z$-embedding.
For each open cover $\mathcal{U}$ of $EM(X)$, let us construct a $Z$-embedding $h : A \to EM(X)$ such that $h$ is $\mathcal{U}$-close to $f$ and $h|_B = f|_B$.
By Proposition~\ref{cvx.}, $EM(X)$ is an AR.
Remark that $X$ is not discrete.
According to Proposition~\ref{str.Z}, $f(B)$ is a strong $Z$-set in $EM(X)$.
By virtue of \cite[Proposition~2.8.12 and Lemma~2.8.10]{Sakaik12}, we can assume that $f(A \setminus B) \cap f(B) = \emptyset$ and $f$ satisfies the following property:
\begin{enumerate}
 \renewcommand{\labelenumi}{(\roman{enumi})}
 \item For each metric $d \in f(B)$ and each sequence $\{a_n\} \subset A$, if $f(a_n)$ is converging to $d$,
 then $a_n$ is converging to $f^{-1}(d)$.
\end{enumerate}
Take a map $\epsilon : EM(X) \to [0,1)$ such that
\begin{enumerate}
 \renewcommand{\labelenumi}{(\roman{enumi})}
 \setcounter{enumi}{1}
 \item for any map $h : A \to EM(X)$, if $D(f(a),h(a)) \leq \epsilon f(a)$ for every $a \in A$,
 then $h$ is $\mathcal{U}$-close to $f$,
 \item for each $d \in EM(X)$, $\epsilon(d) \leq D(d,f(B))/2$,
 and $\epsilon(d) = 0$ if and only if $d \in f(B)$.
\end{enumerate}

For each $k \in \N$, we put
 $$A_k = \{a \in A \mid 2^{-k} \leq \epsilon f(a) \leq 2^{-k + 1}\}$$
 and $\phi_k(a) = 2 - 2^k\epsilon f(a)$.
Then $A \setminus B = \bigcup_{k \in \N} A_k$.
Fixing a closed embedding $p : A \to \cs_1$, we can define a map $q_i^k : A_k \to [0,1]$ by
 $$q_i^k(a) = \left\{
 \begin{array}{ll}
  0 &\text{if } i = 1,\\
  \epsilon f(a)(1 - \phi_k(a)) &\text{if } i = 2,\\
  \epsilon f(a)(1 - \phi_k(a))p(a)(1) & \text{if } i = 3,\\
  \epsilon f(a) &\text{if } i = 2j, j \geq 2,\\
  \epsilon f(a)((1 - \phi_k(a))p(a)((i - 1)/2) + \phi_k(a)p(a)((i - 3)/2)) &\text{if } i = 2j + 1, j \geq 2.
 \end{array}
 \right.$$
For each $m \in \N$, let $\psi_m : \gamma([2^{-m},2^{-m + 1}]) \to [0,1]$ be a map defined by $\psi_m(x) = 2^m\gamma^{-1}(x) - 1$.
Define a map $g_k : A_k \to PM(\{x_0\} \cup \gamma([0,1]))$, $k \in \N$, as follows:
\begin{multline*}
 g_k(a)(x,x_0) = g_k(a)(x_0,x)\\
 = \left\{
 \begin{array}{ll}
  \epsilon f(a) &\text{if } x = x_\infty,\\
  \psi_{2k + i}(x)q_i^k(a) + (1 - \psi_{2k + i}(x))q_{i + 1}^k(a) &\text{if } x \in \gamma([2^{-2k - i},2^{-2k - i + 1}]),\\
  0 &\text{if } x = x_0 \text{ or } 2^{-2k} \leq \gamma^{-1}(x) \leq 1,\\
 \end{array}
 \right.
\end{multline*}
 and for any $x, y \in \{x_0\} \cup \gamma([0,1])$, $g_k(a)(x,y) = |g_k(a)(x,x_0) - g_k(a)(y,x_0)|$.
Here we will observe that $g_k(a) = g_{k + 1}(a)$ for all $a \in A_k \cap A_{k + 1}$.
Indeed, $g_k(a)(x_\infty,x_0) = \epsilon f(a) = g_{k + 1}(a)(x_\infty,x_0)$,
 and $g_k(a)(x,x_0) = 0 = g_{k + 1}(a)(x,x_0)$ for every $x \in \{x_0\} \cup \gamma([0,1])$ with $x = x_0$ or $2^{-2k} \leq \gamma^{-1}(x) \leq 1$.
Because $\epsilon f(a) = 2^{-k}$,
 $q_1^k(a) = q_2^k(a) = q_3^k(a) = 0$.
Thus for each $x \in \gamma([2^{-2k - 1},2^{-2k}])$,
 $$g_k(a)(x,x_0) = \psi_{2k + 1}(x)q_1^k(a)+(1 - \psi_{2k + 1}(x))q_2^k(a) = 0 = g_{k + 1}(a)(x,x_0),$$
 and for each $x \in \gamma([2^{-2k - 2},2^{-2k - 1}])$,
 $$g_k(a)(x,x_0) = \psi_{2k + 2}(x)q_2^k(a)+(1 - \psi_{2k + 2}(x))q_3^k(a) = 0 = g_{k + 1}(a)(x,x_0).$$
Furthermore, $q_3^k(a) = 0 = q_1^{k + 1}(a)$, $q_{2j + 3}^k(a) = \epsilon f(a)p(a)(j) = q_{2j + 1}^{k + 1}(a)$ and $q_{2j + 2}^k(a) = \epsilon f(a) = q_{2j}^{k + 1}(a)$ for every $j \geq 1$,
 and hence for all $x \in \gamma([2^{-2k - i - 2},2^{-2k - i - 1}])$, $i \geq 1$,
\begin{align*}
 g_k(a)(x,x_0) &= \psi_{2k + i + 2}(x)q_{i + 2}^k(a) + (1 - \psi_{2k + i + 2}(x))q_{i + 3}^k(a)\\
 &= \psi_{2(k + 1) + i}(x)q_i^{k + 1}(a) + (1 - \psi_{2(k + 1) + i}(x))q_{i + 1}^{k + 1}(a) = g_{k + 1}(a)(x,x_0).
\end{align*}
It follows that $g_k(a) = g_{k + 1}(a)$.
Letting $g : A \setminus B \to AM(\{x_0\} \cup \gamma([0,1]))$ be a map defined by
 $$g(a)(x,y) = \left\{
 \begin{array}{ll}
 g_k(a)(x,y) + |\gamma^{-1}(x) - \gamma^{-1}(y)| &\text{if } x, y \in \gamma([0,1]),\\
 g_k(a)(y,x_0) + \gamma^{-1}(y) + 1 &\text{if } x = x_0 \text{ and } y \in \gamma([0,1]),\\
 g_k(a)(x,x_0) + \gamma^{-1}(x) + 1 &\text{if } x \in \gamma([0,1]) \text{ and } y = x_0,\\
 g_k(a)(x,y) &\text{if } x = y = x_0,
 \end{array}
 \right.$$
 where $a \in A_k$,
 we can obtain a function $g' : A \setminus B \to PAM(\alpha X)$ so that for any $a \in A \setminus B$,
\begin{enumerate}
 \item $\dom{g'(a)} = \xi(1/\epsilon f(a)) \cup \eta(\epsilon f(a))$;
 \item $g'(a)(x,y) = \left\{
 \begin{array}{ll}
 f(a)(x,y) &\text{if } x, y \in \xi(1/\epsilon f(a)),\\
 g(a)(x,y) &\text{if } x, y \in \{x_0\} \cup \eta(\epsilon f(a)),\\
 f(a)(x,x_0) + g(a)(y,x_0) &\text{if } x \in \xi(1/\epsilon f(a)) \text{ and } y \in \eta(\epsilon f(a)),\\
 f(a)(y,x_0) + g(a)(x,x_0) &\text{if } x \in \eta(\epsilon f(a)) \text{ and } y \in \xi(1/\epsilon f(a)).
 \end{array}
 \right.$
\end{enumerate}
The continuity of $g'$ follows from Lemma~\ref{conti.}.

Let $h|_{A \setminus B} : A \setminus B \to EM(X)$ be a map defined by $h(a) = reg'(a)$.
Note that $D(f(a),h(a)) \leq \epsilon f(a)$ by Lemma~\ref{approx.}.
According to (iii), the map $h$ can be extended to the desired map $h : A \to EM(X)$ by $h|_B = f|_B$,
 and verify that
 $$h(A \setminus B) \subset EM(X) \setminus f(B) = EM(X) \setminus h(B).$$
By (ii), $h$ is $\mathcal{U}$-close to $f$.
It is remains to show that $h$ is a $Z$-embedding.
To check the injectivity of $h$, fix any $a_1, a_2 \in A \setminus B$ with $h(a_1) = h(a_2)$,
 where we get some $k_1, k_2 \in \N$ such that $a_1 \in A_{k_1}$ and $a_2 \in A_{k_2}$ respectively.
Let $k = \max\{k_l \mid i = 1, 2\}$.
Remark that for each $i \in \N$ and for each $l = 1, 2$,
\begin{align*}
 q_{2(k - k_l) + i + 1}^{k_l}(a_l) &= \psi_{2k + i}(x_{2k + i})q_{2(k - k_l) + i}^{k_l}(a_l) + (1 - \psi_{2k + i}(x_{2k + i}))q_{2(k - k_l) + i + 1}^{k_l}(a_l)\\
 &= g_{k_l}(a_l)(x_{2k + i},x_0) = g(a_l)(x_{2k + i},x_0) - (2^{-2k - i} + 1)\\
 &= h(a_l)(x_{2k + i},x_0) - (2^{-2k - i} + 1),
\end{align*}
 so when $i = 3$,
 $$\epsilon f(a_1) = q_{2(k - k_1) + 4}^{k_1}(a_1) = q_{2(k - k_2) + 4}^{k_2}(a_2) = \epsilon f(a_2).$$
Hence $a_1, a_2 \in A_k$ and $\phi_k(a_1) = \phi_k(a_2)$.
In the case where $\phi_k(a_1) = 1$,
 $$p(a_1)(j) = q_{2j + 3}^k(a_1)/\epsilon f(a_1) = q_{2j + 3}^k(a_2)/\epsilon f(a_2) = p(a_2)(j)$$
 for any $j \in \N$.
In the case where $\phi_k(a_1) \neq 1$,
 $$p(a_1)(1) = q_3^k(a_1)/((1 - \phi_k(a_1))\epsilon f(a_1)) = q_3^k(a_2)/((1 - \phi_k(a_2))\epsilon f(a_2)) = p(a_2)(1).$$
Assuming that $p(a_1)(j) = p(a_2)(j)$ for some $j \in \N$, we see that
\begin{align*}
 p(a_1)(j + 1) &= (q_{2j + 3}^k(a_1)/\epsilon f(a_1) - \phi_k(a_1)p(a_1)(j))/(1 - \phi_k(a_1))\\
 &= (q_{2j + 3}^k(a_2)/\epsilon f(a_2) - \phi_k(a_2)p(a_2)(j))/(1 - \phi_k(a_2)) = p(a_2)(j + 1).
\end{align*}
By induction, $p(a_1)(j) = p(a_2)(j)$ for all $j \in \N$.
Consequently, $p(a_1) = p(a_2)$.
The injectivity of $h$ follows from the one of $p$.

We prove that $h$ is a closed map.
For any sequence $\{a_n\} \subset A$ and any metric $d \in EM(X)$ such that $h(a_n) \to d$ as $n \to \infty$, we will choose a subsequence of $\{a_n\}$ converging to some point in $A$.
Notice that
 $$D(f(a_n),d) \leq D(f(a_n),h(a_n)) + D(h(a_n),d) \leq \epsilon f(a_n) + D(h(a_n),d).$$
When $\epsilon f(a_n) \to 0$,
 $D(f(a_n),d) \to 0$.
Due to the continuity of $\epsilon$, $\epsilon(d) = 0$,
 so $d \in f(B)$ by (iii).
It follows from (i) that $a_n$ converges to $f^{-1}(d)$.
When $\epsilon f(a_n) \nrightarrow 0$,
 we may replace $\{a_n\}$ with a subsequence in $A_k = (\epsilon f)^{-1}([2^{-k},2^{-k + 1}])$ for some $k \in \N$.
By the compactness of $[2^{-k},2^{-k + 1}]$,
 we can also replace $\{a_n\}$ with a subsequence such that $\epsilon f(a_n)$ converges to some number $c \in [2^{-k},2^{-k + 1}]$.
For each $j \in \N$,
\begin{align*}
 c &= \lim_{n \to \infty} \epsilon f(a_n) = \lim_{n \to \infty} (h(a_n)(x_{2(k + j) + 1},x_0) - (2^{-2(k + j) - 1} + 1))\\
 &= d(x_{2(k + j) + 1},x_0) - (2^{-2(k + j) - 1} + 1).
\end{align*}
The metric $d \in EM(X)$ can be extended to $\overline{d} \in AM(\alpha X)$.
Therefore
 $$\overline{d}(x_\infty,x_0) = \lim_{j \to \infty} d(x_{2(k + j) + 1},x_0) = \lim_{j \to \infty} (c + (2^{-2(k + j) - 1} + 1)) = c + 1.$$
In the case that $c = 2^{-k}$, $\lim_{n \to \infty} \phi_k(a_n) = 1$,
 so we can assume that $\phi_k(a_n) > 0$.
Then
 \begin{align*}
 &\lim_{n \to \infty} p(a_n)(j)\\
 &\ \ = \lim_{n \to \infty} (h(a_n)(x_{2(k + j + 1)},x_0) - (2^{-2(k + j + 1)} + 1) - \epsilon f(a_n)(1 - \phi_k(a_n))p(a_n)(j + 1))/(\epsilon f(a_n)\phi_k(a_n))\\
 &\ \ = 2^k(d(x_{2(k + j + 1)},x_0) - (2^{-2(k + j + 1)} + 1))
 \end{align*}
 for every $j \in \N$.
Note that as $j \to \infty$,
 $$2^k(d(x_{2(k + j + 1)},x_0) - (2^{-2(k + j + 1)} + 1)) \to 2^k(\overline{d}(x_\infty,x_0) - 1) = 2^k((2^{-k} + 1) - 1) = 1,$$
 which implies that $p(a_n)$ converges to $\{2^k(d(x_{2(k + j + 1)},x_0) - (2^{-2(k + j + 1)} + 1))\} \in \cs_1$.
Since $p$ is a closed embedding,
 $a_n$ is converging to some point of $A$.
By the same argument, $a_n$ is converging to some point of $A$ in the case that $c = 2^{-k + 1}$.
When $c \in (2^{-k},2^{-k + 1})$,
 it can be assumed that $\phi_k(a_n) \in (0,1)$,
 and hence as $n \to \infty$,
\begin{align*}
 p(a_n)(1) &= (h(a_n)(x_{2(k + 1)},x_0) - (2^{-2(k + 1)} + 1))/(\epsilon f(a_n)(1 - \phi_k(a_n)))\\
 &\to (d(x_{2(k + 1)},x_0) - (2^{-2(k + 1)} + 1))/(c(2^kc - 1)).
\end{align*}
Suppose that for some $j \in \N$, $p(a_n)(j)$ is converging and let $p_j = \lim_{n \to \infty} p(a_n)(j)$.
Then
\begin{align*}
 &\lim_{n \to \infty} p(a_n)(j + 1)\\
 &\ \ \ \ = \lim_{n \to \infty} (h(a_n)(x_{2(k + j + 1)},x_0) - (2^{-2(k + j + 1)} + 1) - \epsilon f(a_n)\phi_k(a_n)p(a_n)(j))/(\epsilon f(a_n)(1 - \phi_k(a_n)))\\
 &\ \ \ \ = (d(x_{2(k + j + 1)},x_0) - (2^{-2(k + j + 1)} + 1) - c(2 - 2^kc)p_j)/(c(2^kc - 1)).
\end{align*}
By induction, for all $j \in \N$, $p(a_n)(j)$ converges and denote $p_j = \lim_{n \to \infty} p(a_n)(j)$.
Recall that $\{p_j\} \in \Q$.
We will prove that $\{p_j\} \in \cs_1$.
Supposing that $p_j \nrightarrow 1$, we can choose a positive number $\lambda \leq 2 - 2^kc$ so that for any $j \in \N$, there is $i(j) \geq j$ such that $p_{i(j)} < 1 - \lambda$.
For every $j \in \N$,
\begin{align*}
 (2^kc - 1)p_{j + 1} + (2 - 2^kc)p_j &= \lim_{n \to \infty} ((1 - \phi_k(a_n))p(a_n)(j + 1) + \phi_k(a_n)p(a_n)(j))\\
 &= \lim_{n \to \infty} q_{2j + 3}^k(a_n)/\epsilon f(a_n)\\
 &= \lim_{n \to \infty} (h(a_n)(x_{2(k + j + 1)},x_0) - (2^{-2(k + j + 1)} + 1))/c\\
 &= (d(x_{2(k + j + 1)},x_0) - (2^{-2(k + j + 1)} + 1))/c.
\end{align*}
It follows that
\begin{align*}
 \lim_{j \to \infty} ((2^kc - 1)p_{j + 1} + (2 - 2^kc)p_j) &= \lim_{j \to \infty} (d(x_{2(k + j + 1)},x_0) - (2^{-2(k + j + 1)} + 1))/c\\
 &= (\overline{d}(x_\infty,x_0) - 1)/c = ((c + 1) - 1)/c = 1.
\end{align*}
Thus there exist $j \in \N$ such that if $i \geq j$,
 then $1 - \lambda^2 \leq (2^kc - 1)p_{i + 1} + (2 - 2^kc)p_i$.
Hence
 $$1 - \lambda^2 \leq (2^kc - 1)p_{i(j) + 1} + (2 - 2^kc)p_{i(j)} \leq (2 - 2^kc)p_{i(j)} + 2^kc - 1.$$
Since $\lambda \leq 2 - 2^kc$,
 we get that
 $$1 - \lambda \leq 1 - \lambda^2/(2 - 2^kc) \leq p_{i(j)},$$
 which is a contradiction.
Thus $\{p_j\} \in \cs_1$.
Then $p(a_n)$ converges to $\{p_j\} \in \cs_1$ as $n \to \infty$,
 so $a_n$ is converging to some point of $A$ because $p$ is a closed embedding.
It follows that $h$ is a closed map.

For every $a \in A \setminus B$, $a \in A_k$ for some $k \in \N$,
 and hence for any $i \in \N$, since
 \begin{align*}
 h(a)(x_{2(k + i) + 1},x_0) &= g(a)(x_{2(k + i) + 1},x_0) = g_k(a)(x_{2(k + i) + 1},x_0) + (2^{-2(k + i) - 1} + 1)\\
 &= q_{2(i + 1)}^k(a) + (2^{-2(k + i) - 1} + 1) = \epsilon f(a) + (2^{-2(k + i) - 1} + 1) < 3,
 \end{align*}
 we have $\liminf_{n \to \infty} h(a)(x_0,X \setminus X_n) \leq 3$.
According to Lemma~\ref{raise}, the image $h(A) = h(A \setminus B) \cup h(B)$, that is contained in $Z(3) \cup f(B)$,
 is a $Z$-set in $EM(X)$.
We conclude that $h$ is a $Z$-embedding.
\end{proof}

\section{Proof of Main Theorem}

Now we shall prove Main Theorem.

\begin{proof}[Proof of Main Theorem]
By Proposition~\ref{M2}, we have $EM(X) \in \mathfrak{M}_2 \subset (\mathfrak{M}_2)_\sigma$.
Due to Propositions~\ref{homot.dense}, $EM(X)$ is homotopy dense in $PM(X)$.
We can decompose $X = \bigoplus_{k \in K} Y_k$ into connected components for some $K \subset \N$.
Since $X$ is chain-connected to infinity,
 we can choose a positive integer $k \in K$ and a path between $x_\infty$ and some point of $Y_k$ by Lemma~\ref{chain}.
Hence $Y_k$ is chain-connected to infinity and $Y_k \cup \{x_\infty\}$ is the one-point compactification of $Y_k$.
Remark that $C(X^2)$ is homeomorphic to the product space $\prod_{(i,j) \in K^2} C(Y_i \times Y_j)$ by virtue of the following homeomorphism:
 $$C(X^2) \ni f \mapsto (f|_{Y_i \times Y_j})_{(i,j) \in K^2} \in \prod_{(i,j) \in K^2} C(Y_i \times Y_j).$$
Let $PM_{(i,j)} = \{d|_{Y_i \times Y_j} \mid d \in PM(X)\}$ and $EM_{(i,j)} = \{d|_{Y_i \times Y_j} \mid d \in EM(X)\}$,
 so $PM(X)$ is homeomorphic to $PM_{(k,k)} \times \prod_{(i,j) \in K^2 \setminus \{(k,k)\}} PM_{(i,j)}$ and $EM(X)$ is homeomorphic to $EM_{(k,k)} \times \prod_{(i,j) \in K^2 \setminus \{(k,k)\}} EM_{(i,j)}$, respectively.
Note that for each $(i,j) \in K^2$, $PM_{(i,j)}$ and $EM_{(i,j)}$ are convex sets in a Fr\'{e}chet space $C(Y_i \times Y_j)$,
 so they are ARs,
 and moreover the product spaces $\prod_{(i,j) \in K^2 \setminus \{(k,k)\}} PM_{(i,j)}$ and $\prod_{(i,j) \in K^2 \setminus \{(k,k)\}} EM_{(i,j)}$ are also ARs.
Observe that $PM(Y_k) = PM_{(k,k)}$ and $EM(Y_k) = EM_{(k,k)}$.
Here we only verify the latter equality.
Clearly, $EM(Y_k) \supset EM_{(k,k)}$.
To show that $EM(Y_k) \subset EM_{(k,k)}$, let any $d \in EM(Y_k)$.
We use the same symbol for the extension of $d$ on $Y_k \cup \{x_\infty\}$.
Fix an admissible metric $\rho \in AM(\alpha X)$,
 so we can define an extension $\overline{d} \in AM(\alpha X)$ of $d$ as follows:
 $$\overline{d}(x,y) = \left\{
 \begin{array}{ll}
 d(x,y) &\text{if } x, y \in Y_k \cup \{x_\infty\},\\
 \rho(x,y) &\text{if } x \in \alpha X \setminus Y_k,\\
 d(x,x_\infty) + \rho(y,x_\infty) &\text{if } x \in Y_k \text{ and } y \in X \setminus Y_k,\\
 d(y,x_\infty) + \rho(x,x_\infty) &\text{if } x \in X \setminus Y_k \text{ and } y \in Y_k.
 \end{array}
 \right.$$
Thus $d \in EM_{(k,k)}$.
By Proposition~\ref{Zsigma}, there exists a $Z_\sigma$-set $Z$ in $PM(Y_k)$ that contains $EM(Y_k)$.
It follows from \cite[Proposition~3.5]{Kos15} that $Z \times \prod_{(i,j) \in K^2 \setminus \{(k,k)\}} PM_{(i,j)}$ is a $Z_\sigma$-set in $PM_{(k,k)} \times \prod_{(i,j) \in K^2 \setminus \{(k,k)\}} PM_{(i,j)}$,
 which means that $EM(X)$ is contained in some $Z_\sigma$-set of $PM(X)$.
According to Proposition~\ref{str.univ.}, $EM(Y_k)$ is strongly $\mathfrak{M}_2$-universal.
Combining \cite[Proposition~2.6]{BeMo} with Proposition~\ref{str.Z}, we can see that the product space $EM_{(k,k)} \times \prod_{(i,j) \in K^2 \setminus \{(k,k)\}} EM_{(i,j)}$ is strongly $\mathfrak{M}_2$-universal,
 and therefore so is $EM(X)$.
Hence the space $EM(X)$ is an $\mathfrak{M}_2$-absorbing set in $PM(X)$.
Combining this with Theorems~\ref{pm} and \ref{abs.}, we conclude that $EM(X)$ is homeomorphic to $\cs_0$.
\end{proof}

\end{document}